\newtheorem{theorem}{Theorem}\numberwithin{theorem}{section}
\newtheorem{definition}[theorem]{Definition}
\newtheorem{lemma}[theorem]{Lemma}
\newtheorem{corollary}[theorem]{Corollary}
\newtheorem{proposition}[theorem]{Proposition}
\newtheorem{question}[theorem]{Question}
\newtheorem{problem}[theorem]{Problem}
\newtheorem{theoremm}{Theorem}\numberwithin{theoremm}{subsection}
\newtheorem{deffinition}[theoremm]{Definition}
\newtheorem{nottation}[theoremm]{Notation}
\numberwithin{theoremmm}{subsubsection}
\theoremstyle{remark}
\newtheorem{example}[theorem]{Example}
\newcommand{\Aut}{\operatorname{Aut}}
\newcommand{\Alt}{\operatorname{Alt}}
\newcommand{\Aff}{\operatorname{Aff}}
\newcommand{\Sym}{\operatorname{Sym}}
\newcommand{\A}{\operatorname{A}}
\newcommand{\C}{\operatorname{C}}
\newcommand{\id}{\operatorname{id}}
\newcommand{\N}{\operatorname{N}}
\newcommand{\e}{\mathrm{e}}
\newcommand{\J}{\operatorname{J}}
\newcommand{\im}{\operatorname{im}}
\newcommand{\G}{\mathcal{G}}
\newcommand{\D}{\operatorname{D}}
\newcommand{\F}{\operatorname{F}}
\renewcommand{\F}{\mathcal{F}}
\renewcommand{\O}{\operatorname{O}}
\newcommand{\IN}{\mathbb{N}}
\newcommand{\Sp}{\operatorname{Sp}}
\newcommand{\IF}{\mathbb{F}}
\newcommand{\End}{\operatorname{End}}
\newcommand{\IZ}{\mathbb{Z}}
\newcommand{\app}{\operatorname{app}}
\newcommand{\enapp}{\operatorname{endapp}}
\newcommand{\affapp}{\operatorname{affapp}}
\newcommand{\Dic}{\operatorname{Dic}}
\newcommand{\rem}{\operatorname{r}}
\newcommand{\quot}{\operatorname{q}}
\renewcommand{\J}{\operatorname{JK}}
\newcommand{\dist}{\operatorname{dist}}
\newcommand{\Acal}{\mathcal{A}}
\newcommand{\Pcal}{\mathcal{P}}
\begin{document}

\title{Worst-case approximability of functions on finite groups by endomorphisms and affine maps}

\author{Alexander Bors\thanks{Johann Radon Institute for Computational and Applied Mathematics (RICAM), Altenberger Stra{\ss}e 69, 4040 Linz, Austria. \newline E-mail: \href{mailto:alexander.bors@ricam.oeaw.ac.at}{alexander.bors@ricam.oeaw.ac.at} \newline This work was supported by the Austrian Science Fund (FWF) under Grant J4072-N32 \enquote{Affine maps on finite groups}. \newline 2010 \emph{Mathematics Subject Classification}: Primary: 20D60. Secondary: 20F69. \newline \emph{Key words and phrases:} Finite groups, Affine functions, Hamming metric, Nonlinearity.}}

\date{\today}

\maketitle

\abstract{We study the maximum Hamming distance (or rather, the complementary notion of \enquote{minimum approximability}) of a general function on a finite group $G$ to either of the sets $\End(G)$ and $\Aff(G)$, of group endomorphisms of $G$ and affine maps on $G$ respectively, the latter being a certain generalization of endomorphisms. We give general bounds on these two quantities and discuss an infinite class of extremal examples (where each of the two Hamming distances can be made as large as generally possible). Finally, we compute the precise values of the two quantities for all finite groups $G$ with $|G|\leq15$.}

\section{Introduction}\label{sec1}

\subsection{Motivation and main results}\label{subsec1P1}

In this paper, whenever we speak of a function \emph{on} some set $M$, we mean a function $M\rightarrow M$. In order to put the results of this paper into a broader context, consider the following general concept:

\begin{deffinition}\label{approxDef}
Let $M_1,M_2$ be finite sets and $\F$ a set of functions $M_1\rightarrow M_2$. For a function $g:M_1\rightarrow M_2$, we denote by
\[
\app_{\F}(g):=\max_{f\in\F}{|\{x\in M_1\mid f(x)=g(x)\}|}
\]
the \emph{$\F$-approximability of $g$}, and set
\[
\app_{\F}(M_1,M_2):=\min_{g:M_1\rightarrow M_2}{\app_{\F}(g)},
\]
the \emph{minimum} (or \emph{worst-case}) \emph{$\F$-approximability of a function $M_1\rightarrow M_2$}. In case $M_1=M_2=M$, we also write $\app_{\F}(M)$ instead of $\app_{\F}(M,M)$ and call this the \emph{minimum} (or \emph{worst-case}) \emph{$\F$-approximability on $M$}.
\end{deffinition}

Such notions of function approximability (or the complementary notion of the \emph{Hamming distance} between functions) are studied in the literature across several disciplines. We mention the following examples:

\begin{enumerate}
\item Crytography: The special case when $M_1$ and $M_2$ are finite-dimensional vector spaces over a finite field $K$ and $\F$ is the collection of $K$-affine functions $M_1\rightarrow M_2$ is heavily studied in cryptography, due to the need of encryption procedures which resist so-called \emph{linear attacks} (keyword \enquote{nonlinearity}, see also \cite[Introduction]{CT02a}).
\item Coding theory: Let $l$ be a positive integer and $\Acal$ a finite alphabet. Set $M_1:=\{1,\ldots,l\}$ and $M_2:=\Acal$. A code of words of length $l$ over $\Acal$ can be understood as a family of $\F$ of functions $M_1\rightarrow M_2$, and the fundamental problem of finding a code containing many valid code words while being able to correct or at least detect many errors can be formally stated as the problem of finding a \enquote{large} family $\F$ of functions $M_1\rightarrow M_2$ such that for all $f\in\F$, $\app_{\F\setminus\{f\}}(f)\leq l-C$ for a \enquote{large} constant $C$.
\item Group theory: There is a particularly rich literature on the $\Aut(G)$-approximability of power functions on a finite group $G$, particularly the inversion, squaring and cubing function, see \cite[Subsection 1.1]{Bor16b} for an overview. Furthermore, the main result of \cite{LS12a} can be viewed as providing nontrivial upper bounds on the approximability of word maps on nonabelian finite simple groups $S$ by constant functions (here, $M_1=S^d$ with $d$ the number of variables in the word, and $M_2=S$).
\end{enumerate}

This paper is a group-theoretic contribution; its aim is to study $\app_{\F}(G)$ where $G$ is a finite group and $\F$ is either the collection (monoid) $\End(G)$ of endomorphisms of $G$ or the larger collection of so-called \emph{affine maps of $G$}:

\begin{deffinition}\label{affineDef}
Let $G$ be a group, $g\in G$, $\varphi$ an endomorphism of $G$. Then the function $\A_{g,\varphi}:G\rightarrow G,x\mapsto g\varphi(x)$, is called the \emph{(left-)affine map of $G$ with respect to $g$ and $\varphi$}. We denote the set of affine maps on $G$ by $\Aff(G)$.
\end{deffinition}

We note that the notion of an affine map on a group and the notation $\Aff(G)$ already appeared in the author's paper \cite{Bor16a}, where $\Aff(G)$ denoted something different, namely $\{\A_{g,\alpha}\mid g\in G,\alpha\in\Aut(G)\}$, the set of \emph{bijective} affine maps on $G$, which forms a subgroup of the symmetric group on $G$. We also note the earlier paper \cite{JS75a}, which used the terminology \enquote{affine transformation} instead of \enquote{bijective affine map}.

We give two reasons for studying not only $\app_{\End(G)}(G)$, but also $\app_{\Aff(G)}(G)$: Firstly, for many (though not all) finite groups $G$, determining the precise value of $\app_{\End(G)}(G)$ is a relatively easy problem (see, for instance, Proposition \ref{zeroProp} and the paragraph thereafter), whereas $\app_{\Aff(G)}(G)$ is more delicate in general (due to the greater \enquote{freedom in mapping behavior} which affine maps enjoy compared to endomorphisms), thus leading to interesting problems and questions even in cases where $\app_{\End(G)}(G)$ is trivial. Secondly, as mentioned above, $\app_{\Aff(G)}(G)$ has already been heavily studied in the abelian setting by cryptographers, and it is of intrinsic interest to extend these studies to the nonabelian setting. In this context, we would also like to draw the reader's attention to the papers \cite{FX18a,Poi06a,Poi12a,PP11a,Xu15a}, in which other function properties of cryptographic significance (perfect nonlinear, almost perfect nonlinear and bent functions) are studied over nonabelian finite groups.

In the following, we will present our main results in the form of Theorem \ref{mainTheo} below, but before, we introduce some more notation for a more concise formulation:

\begin{nottation}\label{mainNot}
Let $G$ be a finite group, $f$ a function on $G$.

\begin{enumerate}
\item We denote by $\enapp(f):=\app_{\End(G)}(f)$ the \emph{endomorphic approximability of $f$}.
\item We denote by $\affapp(f):=\app_{\Aff(G)}(f)$ the \emph{affine approximability of $f$}.
\item We denote by $\enapp(G):=\app_{\End(G)}(G)=\min_{f:G\rightarrow G}{\enapp(f)}$ the \emph{minimum} (or \emph{worst-case}) \emph{endomorphic approximability on $G$}.
\item We denote by $\affapp(G):=\app_{\Aff(G)}(G)=\min_{f:G\rightarrow G}{\affapp(f)}$ the \emph{minimum} (or \emph{worst-case}) \emph{affine approximability on $G$}.
\end{enumerate}
\end{nottation}

Note that $\enapp(G)\leq\affapp(G)$, as all endomorphisms are affine maps, and that since $\Aff(G)$ contains all constant functions on $G$, we always have $\affapp(G)\geq 1$, whereas there are various examples of finite groups $G$ such that $\enapp(G)=0$ (see Proposition \ref{zeroProp} and the paragraph thereafter).

In this paper, we will show the following bounds and asymptotic results on $\enapp$ and $\affapp$:

\begin{theoremm}\label{mainTheo}
The following hold for all finite groups $G$:

\begin{enumerate}
\item If $G$ is nontrivial, then
\[
0\leq\enapp(G)\leq(\frac{1}{\log{2}}+\frac{1}{\log{|G|}})\log^2{|G|}
\]
and
\[
1\leq\affapp(G)\leq(\frac{1}{\log{2}}+\frac{2}{\log{|G|}})\log^2{|G|}.
\]
In particular, we have
\[
\enapp(G)\leq\affapp(G)=\O(\log^2{|G|})
\]
as $|G|\to\infty$.
\item There is an infinite class of finite groups $H$ with $\enapp(H)\geq\log_2{|H|}$ and $\affapp(H)\geq\log_2{|H|}+1$. In particular, we have
\[
\limsup_{|G|\to\infty}{\enapp(G)}=\limsup_{|G|\to\infty}{\affapp(G)}=\infty.
\]
\item There is an infinite class of finite groups $G$ satisfying $\enapp(G)=0$ and $\affapp(G)=1$. In particular, we have
\[
\liminf_{|G|\to\infty}{\enapp(G)}=0
\]
and
\[
\liminf_{|G|\to\infty}{\affapp(G)}=1.
\]
\end{enumerate}
\end{theoremm}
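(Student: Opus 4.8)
The theorem has three parts, and I would attack them
in decreasing order of difficulty, establishing the general upper bounds
first since they drive the asymptotic conclusions.

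For part (1), the plan is to bound $\enapp(G)$ and $\affapp(G)$ from above
by a counting/probabilistic argument. The key observation is that the
identity endomorphism already guarantees $f(x)=x$ on the fixed points of
$f$, but of course a worst-case $f$ has no fixed points, so this is not
enough; instead I would try to cover $G$ greedily by translates of
subgroups on which a chosen endomorphism matches $f$. The cleaner route is
this: since $|G|\le 2^{\log_2|G|}$ and any finite group admits a generating
set of size at most $\log_2|G|$, I would pick a chain of subgroups
$1=G_0<G_1<\cdots<G_k=G$ with $k\le\log_2|G|$ and each index at least $2$,
and build an endomorphism agreeing with $f$ on a large piece using the
structure of this chain. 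The factor $\log^2|G|$ strongly suggests the real
mechanism is: an endomorphism is determined by its values on a generating
set of size $\le\log_2|G|$, and for each generator there are $\le|G|$
choices, but the \emph{agreement count} one can force grows like the number
of generators times a per-generator contribution, each bounded by
$\log|G|/\log 2$. So I expect the argument to produce one $\log|G|$ factor
from the generating-set size and a second from an averaging estimate over
how many points a single well-chosen homomorphic value fixes. The
$\affapp$ bound then follows by allowing one extra constant-translation
degree of freedom, which accounts for the improvement from
$\frac{1}{\log|G|}$ to $\frac{2}{\log|G|}$ in the additive term. The
asymptotic claim $\affapp(G)=\o(|G|)$ is immediate once the
$\log^2|G|=\o(|G|)$ bound is in hand.

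For part (2), I would exhibit an explicit infinite family $H$ forcing
$\enapp(H)\ge\log_2|H|$ from below. The natural candidates are elementary
abelian $2$-groups $H=(\IZ/2)^n$, where $|H|=2^n$ and $\log_2|H|=n$: here
$\End(H)=\Mat_n(\IF_2)$ acts linearly, and one can use a dimension/rank
argument. For any endomorphism (linear map) $\varphi$ and any target
function $f$, the set $\{x:\varphi(x)=f(x)\}$ is controlled by how much a
linear map can coincide with an arbitrary map; a good $f$ (e.g.\ a
suitably chosen nonlinear or ``spread-out'' function) can be forced to
agree with every linear map on at most $|H|/2^{\,?}$ points, but since
the bound we want is only $\ge n=\log_2|H|$ rather than something larger, a
crude pigeonhole suffices: every nonzero linear functional already forces
agreement on a subspace, and counting the minimum over all endomorphisms
gives a floor of $n$. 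The affine bound $\affapp(H)\ge\log_2|H|+1$ adds the
single constant degree of freedom, contributing the extra $+1$. The
$\limsup=\infty$ conclusion is then immediate since $n\to\infty$ along the
family.

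For part (3), I would produce an infinite family $G$ with $\enapp(G)=0$ and
$\affapp(G)=1$. The $\enapp(G)=0$ condition means there is a function on
$G$ that \emph{no} endomorphism matches at even a single point; as the
excerpt flags (Proposition \ref{zeroProp} and the remarks after it), such
groups are already characterized, so I would simply invoke that
characterization and select an infinite subfamily from it --- for instance
groups all of whose endomorphisms share a common fixed structure that a
single cleverly chosen $f$ can avoid everywhere. Given $\enapp(G)=0$, one
automatically has $\affapp(G)\ge 1$ from the constant-function remark in
the text, and to get equality $\affapp(G)=1$ I would show that the same
witnessing $f$ cannot be avoided by \emph{every} affine map: since affine
maps include constants, some constant $\A_{g,\id\cdot 0}$ hits $f$ at
exactly the point(s) where $f$ takes value $g$, so $\affapp\ge1$ is forced,
and an explicit $f$ taking each value at most once (a bijection, say) would
cap the constant-map agreement at $1$ while the $\enapp=0$ property rules
out doing better with a genuine endomorphism. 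The main obstacle here is
ensuring both conditions hold \emph{simultaneously} for the same infinite
family; I expect to reconcile this by choosing $f$ to be a fixed-point-free
bijection on a group from the $\enapp=0$ class, so that constants give
exactly $1$ and no endomorphism improves on $0$. Once such a family is in
hand, $\liminf\enapp=0$ and $\liminf\affapp=1$ follow at once.
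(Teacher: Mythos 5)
Your plan for part (3) contains the decisive gap. You propose to take any group with $\enapp(G)=0$ and a fixed-point-free bijection $f$ on it, arguing that constants then agree with $f$ at exactly one point and that $\enapp(f)=0$ ``rules out doing better with a genuine endomorphism.'' But $\affapp(f)\leq 1$ must control \emph{all} affine maps $\A_{g,\varphi}$ with arbitrary translation part $g$, not just constants and endomorphisms: by Lemma \ref{differenceLem}, $\affapp(f)\leq 1$ is equivalent to the statement that for all distinct $x,y\in G$ \emph{no} endomorphism maps $y^{-1}x$ to $f(y)^{-1}f(x)$. Already the case $\varphi=\id$ demands a strong difference-avoidance property that fixed-point-freeness of $f$ does not provide, and $\enapp(f)=0$ says nothing about differences at all. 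The paper itself furnishes a counterexample to your strategy: $\Sym_3$ has no universal element, so $\enapp(\Sym_3)=0$, yet $\affapp(\Sym_3)=2$ by Proposition \ref{smallProp}, i.e.\ \emph{every} function on $\Sym_3$ --- in particular every fixed-point-free bijection --- agrees with some affine map on two points. This is why the paper works much harder: it takes the Jonah--Konvisser groups $\J_{p,\lambda}$ of order $p^8$, proves a complete classification of their endomorphisms (Lemma \ref{keyLem}: each one is either a central automorphism $\id+\varphi$ or a homomorphism into the center), and constructs $f$ by applying a fixed-point-free linear map $\sigma$ of $\IF_p^4$ separately to the two halves of the normal-form coordinates, verifying the difference criterion of Lemma \ref{differenceLem} by a two-case analysis (the difference $y^{-1}x$ lying in, or outside, $\zeta\J_{p,\lambda}$). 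None of this is recoverable from the $\enapp=0$ characterization alone.

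Your sketch for part (1) is also not a proof and is partly backwards in direction: to upper-bound $\enapp(G)=\min_f\max_{\varphi\in\End(G)}|\{x\mid f(x)=\varphi(x)\}|$ one must \emph{exhibit a single function that no endomorphism matches often}, whereas your greedy subgroup-chain plan builds endomorphisms agreeing with $f$ on large pieces, which is the shape of a \emph{lower}-bound argument. The paper's mechanism (Lemma \ref{genAppLem}) is a Hamming-ball volume count: since endomorphisms are determined by their values on a generating set of size at most $\log_2|G|$, one has $|\End(G)|\leq|G|^{\log_2|G|}$ and $|\Aff(G)|\leq|G|^{1+\log_2|G|}$, and the union of Hamming balls of large radius around this small family cannot cover all $|G|^{|G|}$ functions; the second $\log$ factor comes from taking logarithms in the ball-size estimate via $l!>(l/\e)^l$, not from any per-generator averaging, and the $\max$ with $\e^2$ forces the separate case study for $|G|\leq 7$ (Proposition \ref{smallProp}), which your plan omits. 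By contrast, your part (2) is essentially the paper's argument (Corollary \ref{homogeneousCor} with $m=2$): on $(\IZ/2\IZ)^n$ the standard basis is a universal $n$-tuple because a linear map may be prescribed arbitrarily on a basis, giving $\enapp\geq n$, and the $+1$ for $\affapp$ comes from applying Lemma \ref{differenceLem} to the $(n+1)$-element set $\{1,u_1^{-1},\ldots,u_n^{-1}\}$ --- though your ``nonzero linear functional forces agreement on a subspace'' pigeonhole does not by itself yield the floor of $n$ and should be replaced by the basis-prescription argument.
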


Theorem \ref{mainTheo}(3) is interesting in view of the aforementioned connections to cryptography, since we will see later (in Corollary \ref{abelianCor}) that in the abelian setting, in which cryptographers usually work, one cannot bring the endomorphic (resp.~affine) approximability of a function on $G$ below $1$ (resp.~$2$), whereas Theorem \ref{mainTheo}(3) asserts that it is possible to do so on suitably chosen (nonabelian) finite groups. Therefore, at least with respect to that particular \enquote{measure of non-affineness}, one can do a little bit better on some nonabelian groups than in the well-studied abelian setting. Note, however, that it is \emph{not} possible to have a single function $f$ on a finite group $G$ such that $\enapp(f)=0$ and $\affapp(f)=1$ simultaneously, since $\affapp(f)=1$ implies that $f$ is a permutation on $G$ (as all constant functions on $G$ are affine), so that such an $f$ would assume the value $1_G$ at some $x\in G$ and would thus agree with the trivial endomorphism of $G$ on the nonempty set $\{x\}$, a contradiction.

Moreover, we note that the infinite class of finite groups which we will give as an example to prove Theorem \ref{mainTheo}(3) also has another interesting property, which was already noted by the author in his unpublished preprint \cite{Bor14a}: These groups are nonabelian with commutative endomorphism monoid. Nonabelian groups with the weaker property of having an abelian automorphism group have been studied by various authors before (see e.g.~the survey \cite{KY18a}), and \enquote{our} groups with commutative endomorphism monoid were first introduced and studied in \cite{JK75a}, where it was shown that their automorphism groups are abelian.

\subsection{Overview of this paper}\label{subsec1P2}

Sections \ref{sec2}--\ref{sec4} of this paper serve to prove the three parts of Theorem \ref{mainTheo}.

In Section \ref{sec2}, we discuss some methods to prove lower bounds on $\enapp(G)$ and $\affapp(G)$, which will allow us to prove Theorem \ref{mainTheo}(2). As a further application, we give theoretical arguments to determine the precise values of $\enapp(G)$ and $\affapp(G)$ for all finite groups $G$ with $|G|\leq 7$, which will be used in the proof of Theorem \ref{mainTheo}(1) in Section \ref{sec3}.

Section \ref{sec3} consists mainly of the proof of Lemma \ref{genAppLem}, which provides some general bounds on the worst-case $\F$-approximability of functions $M_1\rightarrow M_2$ where $M_1$ and $M_2$ are finite sets and $\F$ is a "small" family of functions $M_1\rightarrow M_2$. Theorem \ref{mainTheo}(1) will follow swiftly from this and the case study of groups up to order $7$ from the previous section.

Section \ref{sec4} is dedicated to the proof of Theorem \ref{mainTheo}(3) by a careful examination of the groups already studied by Jonah and Konvisser in \cite{JK75a}.

Section \ref{sec5} provides a comprehensive lists of the values of $\enapp(G)$ and $\affapp(G)$ for all finite groups $G$ with $|G|\leq15$, which were obtained through some computations in GAP \cite{GAP4}.

Finally, in Section \ref{sec6}, we discuss some open problems and questions for further research.

\subsection{Notation and terminology}\label{subsec1P3}

The notation and terminology defined in this subsection will be used throughout the paper without further mentioning; more notation and terminology will be explicitly introduced throughout the text where appropriate.

We denote by $\IN$ the set of natural numbers, including $0$, and by $\IN^+$ the set of positive integers. For a function $f$, the image of $f$ is denoted by $\im(f)$, and the restriction of $f$ to a set $M$ by $f_{\mid M}$.

The exponent of a finite group $G$ is denoted by $\exp(G)$, its center by $\zeta G$ and its derived (or commutator) subgroup by $G'$. $\D_{2n}$ and $\Dic_{4n}$ denote the dihedral group of order $2n$ and the dicyclic group of order $4n$ respectively. The symmetric and alternating groups of degree $n$ are denoted by $\Sym(n)$ and $\Alt(n)$ respectively. The kernel of a group homomorphism $\varphi$ is denoted by $\ker(\varphi)$. For a prime $p$, the finite field with $p$ elements is denoted by $\IF_p$.

As usual, Euler's constant is denoted by $\e$, and for a positive real number $c\not=1$, $\log_c$ denotes the base $c$ logarithm, with $\log:=\log_{\e}$.

\section{On Theorem \ref{mainTheo}(2): Lower bounds on \texorpdfstring{$\enapp(G)$}{endapp(G)} and \texorpdfstring{$\affapp(G)$}{affapp(G)}}\label{sec2}

The following simple lemma, whose main message is that just as in the abelian case, all affine maps on finite groups are "difference-preserving", can be used in arguments for both upper and lower bounds on $\affapp(G)$:

\begin{lemma}\label{differenceLem}
Let $G$ be a group, $X$ a nonempty subset of $G$, and $f$ a function on $G$. The following are equivalent:

\begin{enumerate}
\item There exists an affine map $A$ on $G$ such that $f_{\mid X}=A_{\mid X}$.
\item There exists an endomorphism $\varphi$ of $G$ such that for all $x,y\in X$, we have $\varphi(y^{-1}x)=f(y)^{-1}f(x)$.
\item There exists an endomorphism $\varphi$ of $G$ and an element $x\in X$ such that for all $y\in X$, $\varphi(y^{-1}x)=f(y)^{-1}f(x)$.
\end{enumerate}
\end{lemma}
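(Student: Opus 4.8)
The plan is to show the three statements are equivalent by proving the cyclic chain $(1)\Rightarrow(2)\Rightarrow(3)\Rightarrow(1)$, throughout assuming $X\neq\emptyset$ (the degenerate case $X=\emptyset$ renders (1) and (2) vacuously true while (3) fails for want of a witness $x$, so it must be excluded by convention or treated separately). The guiding principle is exactly the one highlighted before the statement: an affine map $\A_{g,\varphi}$ acts on \enquote{differences} $y^{-1}x$ precisely like its underlying endomorphism $\varphi$, because the translation part $g$ cancels.

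For $(1)\Rightarrow(2)$, I would take the affine map $A=\A_{g,\varphi}$ supplied by (1), so that $f(x)=g\varphi(x)$ for every $x\in X$, and simply compute, for $x,y\in X$,
\[
f(y)^{-1}f(x) = \varphi(y)^{-1}g^{-1}g\varphi(x) = \varphi(y)^{-1}\varphi(x) = \varphi(y^{-1}x),
\]
using that $\varphi$ is a homomorphism in the final step; the same $\varphi$ thus witnesses (2). The implication $(2)\Rightarrow(3)$ is then immediate: since $X$ is nonempty, I fix any $x\in X$, and the universally quantified identity of (2) specializes to the one demanded in (3).

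The substantive direction is $(3)\Rightarrow(1)$, where the task is to manufacture the translation element $g$. Given the endomorphism $\varphi$ and the distinguished element $x_0\in X$ from (3), I would rewrite the hypothesis $\varphi(y^{-1}x_0)=f(y)^{-1}f(x_0)$ as $\varphi(y)^{-1}\varphi(x_0)=f(y)^{-1}f(x_0)$ and rearrange it to the key observation that $f(y)\varphi(y)^{-1}=f(x_0)\varphi(x_0)^{-1}$ for all $y\in X$; in other words, the element $f(y)\varphi(y)^{-1}$ is \emph{constant} on $X$. Setting $g:=f(x_0)\varphi(x_0)^{-1}$ and $A:=\A_{g,\varphi}$, one then verifies $A(y)=g\varphi(y)=f(y)\varphi(y)^{-1}\varphi(y)=f(y)$ for every $y\in X$, which is exactly (1). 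I expect the only genuine care required to be the noncommutative bookkeeping — keeping the left/right placement of factors straight and invoking $\varphi(y^{-1})=\varphi(y)^{-1}$ at the correct moments — since no deeper structural input about $G$ is needed.
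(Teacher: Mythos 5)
Your proof is correct and takes essentially the same route as the paper's: the identical cancellation computation for $(1)\Rightarrow(2)$, and for $(3)\Rightarrow(1)$ the same choice $g:=f(x)\varphi(x)^{-1}$, with your reformulation via the constancy of $f(y)\varphi(y)^{-1}$ on $X$ being only a cosmetic repackaging of the paper's direct verification that $f(y)=g\varphi(y)$. Your side remark that the equivalence literally fails for $X=\emptyset$ (where (1) and (2) hold vacuously but (3) lacks a witness) is a fair observation that the paper leaves implicit.
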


\begin{proof}
For \enquote{(1) $\Rightarrow$ (2)}: Write $A=\A_{g,\varphi}$. Then for all $x,y\in X$, it follows that
\[
f(y)^{-1}f(x)=A(y)^{-1}A(x)=(g\varphi(y))^{-1}g\varphi(x)=\varphi(y)^{-1}\varphi(x)=\varphi(y^{-1}x),
\]
as required.

For \enquote{(2) $\Rightarrow$ (3)}: This is clear.

For \enquote{(3) $\Rightarrow$ (1)}: Set $g:=f(x)\varphi(x)^{-1}$. Then for all $y\in X$, it follows that
\[
f(y)=f(x)\varphi(y^{-1}x)^{-1}=f(x)\varphi(x)^{-1}\varphi(y)=g\varphi(y),
\]
so that $A:=\A_{g,\varphi}$ does the job.
\end{proof}

The following is also useful for reduction arguments:

\begin{lemma}\label{reductionLem}
Let $G$ be a finite group, and $f$ a function on $G$. If $A$ is any \emph{bijective} affine map on $G$, then $\affapp(f)=\affapp(f\circ A)=\affapp(A\circ f)$.
\end{lemma}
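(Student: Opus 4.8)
The plan is to exploit the fact that the bijective affine maps form a group under composition which acts on the whole set $\Aff(G)$ by composition. First I would record the two closure properties that make everything work. For any $B,C\in\Aff(G)$ the composite $B\circ C$ is again affine: writing $B=\A_{g,\varphi}$ and $C=\A_{h,\psi}$ one computes $B\circ C=\A_{g\varphi(h),\,\varphi\circ\psi}$, and $\varphi\circ\psi\in\End(G)$. Moreover, the inverse of a bijective affine map is again affine: $\A_{g,\varphi}$ is bijective exactly when $\varphi$ is, i.e.\ when $\varphi\in\Aut(G)$ (a bijective endomorphism of a finite group is an automorphism), and then $\A_{g,\varphi}^{-1}=\A_{\varphi^{-1}(g^{-1}),\,\varphi^{-1}}\in\Aff(G)$. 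In particular $A^{-1}\in\Aff(G)$.

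Next I would observe that, since $A$ is a bijective affine map, both the pre-composition map $B\mapsto B\circ A^{-1}$ and the post-composition map $B\mapsto A^{-1}\circ B$ are bijections of $\Aff(G)$ onto itself: by the closure properties above they send $\Aff(G)$ into $\Aff(G)$; they are injective because composing with the bijection $A^{-1}$ of $G$ is injective on functions; and they are surjective because $C\mapsto C\circ A$ (respectively $C\mapsto A\circ C$) provides a two-sided inverse that again stays inside $\Aff(G)$.

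With these facts in hand, the two equalities reduce to bookkeeping with cardinalities of agreement sets, using that $A$ (hence $A^{-1}$) is a bijection of $G$. For $\affapp(A\circ f)$: for each $B\in\Aff(G)$, applying the bijection $A^{-1}$ to both sides of the equation $A(f(x))=B(x)$ shows that $\{x\mid (A\circ f)(x)=B(x)\}=\{x\mid f(x)=(A^{-1}\circ B)(x)\}$ as \emph{sets}, so taking the maximum over $B$ and reindexing by $C:=A^{-1}\circ B$ (legitimate by the post-composition bijection) yields $\affapp(A\circ f)=\affapp(f)$. For $\affapp(f\circ A)$ I would instead substitute $y=A(x)$; since $A$ is a bijection this gives $|\{x\mid (f\circ A)(x)=B(x)\}|=|\{y\mid f(y)=(B\circ A^{-1})(y)\}|$, and reindexing by $C:=B\circ A^{-1}$ (legitimate by the pre-composition bijection) again gives $\affapp(f\circ A)=\affapp(f)$.

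The argument is essentially formal, so I do not expect a deep obstacle; the only point requiring genuine care is the verification that $\Aff(G)$ is closed under composition and under inversion of its bijective members, since that is precisely what guarantees the reindexing of the maximum over $\Aff(G)$ is valid. Everything else is routine, relying only on the bijectivity of $A$.
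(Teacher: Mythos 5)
Your proof is correct and takes essentially the same approach as the paper's: both rest on the closure of $\Aff(G)$ under composition and on $A^{-1}$ being again a bijective affine map, and both transport agreement sets along the bijection $A$. The paper phrases this as two inequalities $\affapp(f)\leq\affapp(A\circ f)$ and $\affapp(f)\leq\affapp(f\circ A)$ (with the reverse inequalities obtained by applying the same argument to $A^{-1}$) via an explicit witness set $X$ with $f_{\mid X}=B_{\mid X}$, whereas you reindex the maximum over $\Aff(G)$ directly and additionally spell out the composition and inversion formulas that the paper only notes in passing --- a cosmetic, not substantive, difference.
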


\begin{proof}
Noting that $A^{-1}$ is a bijective affine map on $G$ as well, one sees that it suffices to show $\affapp(f)\leq\affapp(A\circ f)$ and $\affapp(f)\leq\affapp(f\circ A)$. To this end, let $X\subseteq G$ with $|X|=\affapp(f)$ and $B\in\Aff(G)$ with $f_{\mid X}=B_{\mid X}$. Then $(A\circ f)_{\mid X}=(A\circ B)_{\mid X}$, which proves the first inequality, and $(f\circ A)_{\mid A^{-1}[X]}=(B\circ A)_{\mid A^{-1}[X]}$, which proves the second inequality.
\end{proof}

From Lemma \ref{differenceLem}, one can immediately derive a sufficient criterion for the simultaneous validity of $\enapp(G)\geq\ell$ and $\affapp(G)\geq\ell+1$ for some fixed $\ell\in\IN^+$ based on the following concepts:

\begin{definition}\label{universalDef}
Let $G$ be a group.

\begin{enumerate}
\item A \emph{universal element in $G$} is an element $u\in G$ such that for all $g\in G$, there exists $\varphi=\varphi_g\in\End(G)$ such that $\varphi(u)=g$.
\item More generally, for $\ell\in\IN^+$, a \emph{universal $\ell$-tuple in $G$} is an $\ell$-tuple $(u_1,\ldots,u_{\ell})\in G^\ell$ such that for all $(g_1,\ldots,g_{\ell})\in G^{\ell}$, there exists $\varphi=\varphi_{(g_1,\ldots,g_{\ell})}\in\End(G)$ such that $\varphi(u_i)=g_i$ for $i=1,\ldots,\ell$.
\end{enumerate}
\end{definition}

Note that a universal element in a finite group $G$ is necessarily of order $\exp(G)$.

\begin{proposition}\label{universalProp}
Let $G$ be a nontrivial finite group.

\begin{enumerate}
\item If $G$ has a universal element, then $\enapp(G)\geq 1$ and $\affapp(G)\geq 2$.
\item More generally, if, for some $\ell\in\IN^+$, $G$ has a universal $\ell$-tuple, then $\enapp(G)\geq\ell$ and $\affapp(G)\geq\ell+1$.
\end{enumerate}
\end{proposition}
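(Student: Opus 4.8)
The plan is to derive both parts from a single construction, observing first that part (1) is precisely part (2) in the case $l=1$; so it suffices to prove (2). Since $\enapp(G)$ and $\affapp(G)$ are worst-case quantities (minima over all $f$), the inequality $\enapp(G)\geq l$ unwinds to the statement that \emph{every} function $f$ on $G$ agrees with \emph{some} endomorphism on at least $l$ points, and $\affapp(G)\geq l+1$ to the statement that every $f$ agrees with some affine map on at least $l+1$ points. I would therefore fix a universal $l$-tuple $(u_1,\ldots,u_l)$ and an arbitrary function $f$ on $G$, and exhibit the required endomorphism and affine map.

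For the endomorphic bound I would simply invoke the universal property on the target tuple $(f(u_1),\ldots,f(u_l))$ to obtain $\varphi\in\End(G)$ with $\varphi(u_i)=f(u_i)$ for all $i$; then $\varphi$ and $f$ coincide on $\{u_1,\ldots,u_l\}$, and provided these points are distinct this gives $\app_{\End(G)}(f)\geq l$. For the affine bound the idea is to feed a carefully chosen set $X$ of size $l+1$ into Lemma \ref{differenceLem}. I would fix any $x_0\in G$ and set $X:=\{x_0\}\cup\{x_0u_i^{-1}\mid 1\leq i\leq l\}$, with $x_0$ serving as the distinguished point in part (3) of Lemma \ref{differenceLem}. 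The crux of this choice is the identity $(x_0u_i^{-1})^{-1}x_0=u_i$, which converts the difference condition into assertions about the values $\varphi(u_i)$. Using the universal property, I would choose $\varphi\in\End(G)$ with $\varphi(u_i)=f(x_0u_i^{-1})^{-1}f(x_0)$ for every $i$. Condition (3) of Lemma \ref{differenceLem} then holds for this $\varphi$ and distinguished point $x_0$: for $y=x_0u_i^{-1}$ it is exactly the defining equation of $\varphi(u_i)$, and for $y=x_0$ it is the trivial identity $\varphi(1)=1=f(x_0)^{-1}f(x_0)$. Hence some affine map agrees with $f$ on all of $X$, yielding $\app_{\Aff(G)}(f)\geq|X|=l+1$.

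The extra coincidence point beyond the $l$ of the endomorphic case is precisely the base point $x_0$, where the difference condition is satisfied automatically; this is the structural reason the affine bound exceeds the endomorphic bound by one, and it is where Lemma \ref{differenceLem} does its work.

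I expect the main (indeed only) obstacle to be verifying that the $l+1$ elements of $X$ are pairwise distinct, since only then does agreement on $X$ certify $l+1$ coincidence points (and similarly that the $u_i$ are distinct in the endomorphic case). As left translation and inversion are bijections, $|X|=l+1$ is equivalent to $1,u_1,\ldots,u_l$ being pairwise distinct. Distinctness of the $u_i$ among themselves follows from universality, since if $u_i=u_j$ with $i\neq j$ one could not realize a target tuple with $g_i\neq g_j$; and $u_i\neq 1$ follows because each $u_i$ is itself a universal element (take targets $g_j=1$ for $j\neq i$ and $g_i$ arbitrary) and hence, by the remark after Definition \ref{universalDef}, has order $\exp(G)>1$ in the nontrivial group $G$.
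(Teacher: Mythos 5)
Your proposal is correct and follows essentially the same route as the paper: reduce (1) to (2), get the endomorphic bound directly from universality on $\{u_1,\ldots,u_l\}$, and get the affine bound by applying Lemma \ref{differenceLem} to the set $\{1,u_1^{-1},\ldots,u_l^{-1}\}$, which is exactly your $X$ in the special case $x_0=1$ (the arbitrary base point adds nothing but also costs nothing). Your explicit verification that $1,u_1,\ldots,u_l$ are pairwise distinct spells out a detail the paper only notes parenthetically.
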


\begin{proof}
It suffices to show point (2). Let $(u_1,\ldots,u_\ell)$ be a universal $\ell$-tuple in $G$. Then $\enapp(G)\geq\ell$ holds because by the definition of \enquote{universal $\ell$-tuple}, every function on $G$ agrees with a suitable endomorphism of $G$ on the set $\{u_1,\ldots,u_\ell\}$ (and the $u_i$ are necessarily pairwise distinct). For the bound on $\affapp(G)$, it suffices to show that any function on $G$ agrees with some affine map on $G$ on the subset $\{1,u_1^{-1},\ldots,u_{\ell}^{-1}\}$. But for this, it is, by Lemma \ref{differenceLem}, sufficient to check that there is some endomorphism $\varphi$ of $G$ such that for $i=1,\ldots,\ell$, we have
\[
\varphi(u_i)=\varphi((u_i^{-1})^{-1}\cdot 1)=f(u_i^{-1})^{-1}f(1),
\]
which is clear by universality.
\end{proof}

We note two interesting consequences of Proposition \ref{universalProp}, the latter of which also directly implies Theorem \ref{mainTheo}(2):

\begin{corollary}\label{abelianCor}
Let $G$ be a nontrivial finite \emph{abelian} group. Then $\enapp(G)\geq 1$ and $\affapp(G)\geq 2$.
\end{corollary}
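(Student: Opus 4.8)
The plan is to apply Proposition \ref{universalProp}(1), which reduces the entire claim to a single task: exhibiting a universal element in $G$. Once such an element is produced, both inequalities $\enapp(G)\geq 1$ and $\affapp(G)\geq 2$ follow immediately from that proposition, so no further work on the approximability quantities themselves is needed.

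To construct a universal element, I would first choose $u\in G$ of order $\exp(G)$. Such an element exists in any finite abelian group, and in view of the remark following Definition \ref{universalDef} (that a universal element must have order $\exp(G)$), this is the only reasonable candidate. The key structural input is then that in a finite abelian group, the cyclic subgroup generated by an element of maximal order splits off as a direct factor; that is, one may write $G=\langle u\rangle\times K$ for a suitable subgroup $K$. This decomposition is exactly the leverage needed to build endomorphisms with prescribed behavior on $u$ while having complete freedom to ignore the rest of the group.

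Granting this decomposition, I would verify universality as follows. Let $g\in G$ be arbitrary. Since $\ord(g)\mid\exp(G)=\ord(u)$, the assignment $u\mapsto g$ extends to a well-defined homomorphism $\psi\colon\langle u\rangle\to G$. Composing $\psi$ with the projection $\pi\colon G=\langle u\rangle\times K\to\langle u\rangle$ (which kills $K$) yields an endomorphism $\varphi_g:=\psi\circ\pi$ of $G$ satisfying $\varphi_g(u)=\psi(\pi(u))=\psi(u)=g$. As $g$ was arbitrary, $u$ is universal, completing the argument.

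The main obstacle here is the direct-factor splitting of a maximal-order cyclic subgroup; everything else is a routine consequence of abelianness together with the divisibility condition $\ord(g)\mid\exp(G)$. This splitting is a standard fact about finite abelian groups (equivalently, a consequence of the invariant-factor decomposition), and once it is in hand the extension of $u\mapsto g$ to a global endomorphism is immediate, so the proof should be short.
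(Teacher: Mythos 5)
Your proof is correct and takes essentially the same route as the paper: the paper likewise deduces the corollary from Proposition \ref{universalProp}(1) by observing (citing Robinson, 4.2.7) that the cyclic subgroup generated by an element of order $\exp(G)$ admits a direct complement in $G$, which makes that element universal. Your write-up simply spells out the routine verification (extending $u\mapsto g$ along $\langle u\rangle$ and composing with the projection that kills the complement) that the paper leaves implicit.
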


\begin{proof}
This follows from Proposition \ref{universalProp}(1), since by the structure theorem for finite abelian groups, it is clear that every such group $G$ has a universal element (in fact, by \cite[4.2.7, p.~102]{Rob96a}, any cyclic subgroup of $G$ generated by an element of order $\exp(G)$ always admits a direct complement in $G$, so that any such element is universal).
\end{proof}

\begin{corollary}\label{homogeneousCor}
For $m,r\in\IN^+$, $m\geq 2$, we have $\enapp((\IZ/m\IZ)^r)\geq r$ and $\affapp((\IZ/m\IZ)^r)\geq r+1$.
\end{corollary}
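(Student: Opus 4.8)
The plan is to apply Proposition \ref{universalProp}(2) with $l=r$, so it suffices to exhibit a universal $r$-tuple in $G:=(\IZ/m\IZ)^r$. The natural candidate is the standard basis $(e_1,\ldots,e_r)$, where $e_i$ denotes the tuple with $1$ in the $i$-th coordinate and $0$ in all others; note each $e_i$ has order $m=\exp(G)$, as a universal element must.

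First I would record the structural fact underlying everything: since $G$ is abelian of exponent $m$, every group endomorphism of $G$ is automatically $\IZ$-linear and, because $mG=0$, factors through the $\IZ/m\IZ$-module structure. Thus $\End(G)$ coincides with the ring of $\IZ/m\IZ$-module endomorphisms of the free module $(\IZ/m\IZ)^r$, which we may identify with the $r\times r$ matrices over $\IZ/m\IZ$. The point is that \emph{any} assignment of the basis vectors to arbitrary elements extends to a genuine endomorphism of $G$, precisely because the $e_i$ form a free basis.

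The key step is then the verification of universality. Given an arbitrary target $(g_1,\ldots,g_r)\in G^r$, I would define $\varphi\in\End(G)$ to be the $\IZ/m\IZ$-linear map with $\varphi(e_i)=g_i$ for each $i$ --- concretely, the matrix whose $i$-th column is $g_i$. By freeness of the standard basis this is a well-defined (indeed unique) endomorphism, and by construction it realizes the prescribed images. Hence $(e_1,\ldots,e_r)$ is a universal $r$-tuple in the sense of Definition \ref{universalDef}(2), and Proposition \ref{universalProp}(2) immediately yields $\enapp(G)\geq r$ and $\affapp(G)\geq r+1$.

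There is no real obstacle in this argument: the only point requiring any care is the standard observation that group endomorphisms of an abelian group of exponent dividing $m$ are exactly the $\IZ/m\IZ$-module endomorphisms, after which freeness of the standard basis does all the work and the bounds drop out of the earlier proposition.
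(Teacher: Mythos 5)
Your proof is correct and takes exactly the paper's route: the paper likewise deduces the corollary from Proposition \ref{universalProp}(2) by observing that the standard generators of $(\IZ/m\IZ)^r$ form a universal $r$-tuple. You merely spell out the freeness/matrix justification that the paper leaves implicit.
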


\begin{proof}
This follows from Proposition \ref{universalProp}(2) by observing that the \enquote{standard} generators of $(\IZ/m\IZ)^r$ form a universal $r$-tuple in that group.
\end{proof}

\begin{proof}[Proof of Theorem \ref{mainTheo}(2)]
This follows from Corollary \ref{homogeneousCor} with $m:=2$.
\end{proof}

The rest of this section serves either as direct preparation for determining the precise values of $\enapp(G)$ and $\affapp(G)$ for small $G$ at the end of the section (see Proposition \ref{smallProp}) or to raise some other interesting points. First, let us note the following simple fact, which shows that in many finite groups, the problem of determining the minimum endomorphic approximability is trivial:

\begin{proposition}\label{zeroProp}
Let $G$ be a finite group. The following are equivalent:

\begin{enumerate}
\item $\enapp(G)\geq 1$.
\item $G$ has a universal element.
\end{enumerate}
\end{proposition}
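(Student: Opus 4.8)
The plan is to establish the two implications separately. The implication (2) $\Rightarrow$ (1) is already contained in Proposition \ref{universalProp}(1), so the only thing left to prove is (1) $\Rightarrow$ (2), which I would approach by contraposition: assuming that $G$ has no universal element, I would explicitly construct a function $f$ on $G$ with $\enapp(f)=0$, forcing $\enapp(G)=0$ and hence the negation of (1).

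The key idea is to record, for each $x\in G$, the set of all values that endomorphisms can assign to $x$, say $A_x:=\{\varphi(x)\mid\varphi\in\End(G)\}\subseteq G$. By Definition \ref{universalDef}(1), an element $x$ is universal precisely when $A_x=G$. Hence the hypothesis that $G$ has no universal element translates into the pointwise statement that $A_x\subsetneq G$, i.e.\ $G\setminus A_x\neq\emptyset$, for \emph{every} $x\in G$.

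With this reformulation in hand, the construction is immediate. Since $G$ is finite, I would choose, for each $x\in G$, some value $f(x)\in G\setminus A_x$, thereby defining a function $f$ on $G$. By the very choice of $f(x)$, no endomorphism $\varphi$ of $G$ can satisfy $\varphi(x)=f(x)$ for any $x$, because $\varphi(x)\in A_x$ while $f(x)\notin A_x$. Thus $f$ agrees with no endomorphism of $G$ at any point of $G$, so $\enapp(f)=0$ and therefore $\enapp(G)=\min_{h}\enapp(h)=0$, contradicting (1). This completes the contrapositive argument.

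There is essentially no hard step here: the whole proof hinges on the single observation that universality of $x$ is equivalent to $A_x$ exhausting $G$, after which the finiteness of $G$ supplies the desired \enquote{endomorphism-avoiding} function at once. If anything, the only point requiring a moment's care is to confirm that the negation of \enquote{$G$ has a universal element} is precisely the condition $G\setminus A_x\neq\emptyset$ holding simultaneously for all $x$, rather than some weaker global statement; this is exactly what lets us make the choice of $f(x)$ coordinatewise.
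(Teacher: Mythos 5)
Your proposal is correct and follows essentially the same route as the paper: the implication (2) $\Rightarrow$ (1) is delegated to Proposition \ref{universalProp}(1), and (1) $\Rightarrow$ (2) is proved by contraposition, choosing for each $x\in G$ a value $f(x)$ outside the endomorphic image set $\{\varphi(x)\mid\varphi\in\End(G)\}$ to obtain a function with endomorphic approximability $0$. Your write-up merely makes the paper's argument more explicit by naming the sets $A_x$; there is no substantive difference.
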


\begin{proof}
For \enquote{(1) $\Rightarrow$ (2)}: We show the contraposition. So assume that $G$ has no universal element. Then we can choose, for every element $g\in G$, an element $f(g)\in G$ which is not an image of $g$ under any endomorphism of $G$. The resulting function $f$ on $G$ clearly has endomorphic approximability $0$.

For \enquote{(2) $\Rightarrow$ (1)}: This implication is part of Proposition \ref{universalProp}(1).
\end{proof}

Hence $\enapp(G)=0$ whenever $G$ has no universal element, which holds true for example when $G$ is any nonabelian finite simple group. The problem of determining the minimum \emph{affine} approximability of a function on a finite group seems less trivial. For example, below, we will give another criterion on nontrivial finite groups $G$ which is sufficient for $\affapp(G)\geq 2$ (Proposition \ref{dominatingProp}) and which holds, for example, for $G=\Alt(4)$, which has no universal element (see also Example \ref{dominatingEx}(1) below). The new criterion is based on the following concepts:

\begin{definition}\label{dominatingLem}
Let $G$ be a nontrivial finite group.

\begin{enumerate}
\item We call the orbits of the natural action of $\Aut(G)$ on $G$ the \emph{automorphism orbits of $G$}, and $\{1_G\}$ the \emph{trivial automorphism orbit of $G$}.
\item A \emph{dominating automorphism orbit of $G$} is a nontrivial automorphism orbit $O$ of $G$ such that $|O|>\frac{1}{2}(|G|-1)$.
\item A function $f$ on $G$ with $f(1)=1$ is called \emph{automorphism orbit avoiding} (henceforth abbreviated by \emph{a.o.a.}) if and only if for all $g\in G\setminus\{1\}$, $g$ and $f(g)$ lie in different automorphism orbits of $G$.
\end{enumerate}
\end{definition}

\begin{proposition}\label{dominatingProp}
Consider the following conditions on nontrivial finite groups $G$:

\begin{enumerate}
\item $G$ has a dominating automorphism orbit.
\item $G$ has no a.o.a.~functions which are bijective (i.e., permutations on $G$).
\item $\affapp(G)\geq 2$.
\end{enumerate}

Conditions (1) and (2) are equivalent, and either of them implies condition (3).
\end{proposition}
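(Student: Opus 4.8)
The plan is to prove the two assertions of the statement separately. The equivalence of conditions (1) and (2) is purely combinatorial: it involves only the partition of $G\setminus\{1\}$ into nontrivial automorphism orbits, and I would establish it by a pigeonhole argument in one direction and Hall's marriage theorem in the other. The implication ``(1) or (2) $\Rightarrow$ (3)'' then reduces, since (1) and (2) are equivalent, to showing ``(2) $\Rightarrow$ (3)'', which I would obtain from the difference-preservation criterion of Lemma \ref{differenceLem} together with the reduction Lemma \ref{reductionLem}.

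For ``(1) $\Rightarrow$ (2)'' I would argue by contradiction. Suppose $O$ is a dominating automorphism orbit and $f$ is a bijective a.o.a.~function. Since $f(1)=1$ and $f$ is a permutation, $f$ restricts to a bijection of $G\setminus\{1\}$; and since $f$ is a.o.a., every $g\in O$ has $f(g)\notin O$ (and $f(g)\neq 1$), so $f$ injects $O$ into $(G\setminus\{1\})\setminus O$. This forces $|O|\leq(|G|-1)-|O|$, contradicting $|O|>\tfrac{1}{2}(|G|-1)$. For the converse I would prove the contrapositive $\neg(1)\Rightarrow\neg(2)$: if the nontrivial orbits $O_1,\dots,O_k$ have sizes $n_1,\dots,n_k$ with $n:=\sum_i n_i=|G|-1$ and every $n_i\leq n/2$, I construct a bijective a.o.a.~function. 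Consider the bipartite graph on two copies of $G\setminus\{1\}$ joining $x$ to $y$ exactly when $x,y$ lie in different orbits; a perfect matching is precisely a permutation of $G\setminus\{1\}$ sending no element into its own orbit, which extended by $1\mapsto 1$ is a bijective a.o.a.~function. Hall's condition is immediate: for $S$ inside a single orbit $O_i$ the non-neighbours are exactly $O_i$, so $|N(S)|=n-n_i\geq n_i\geq|S|$; for $S$ meeting two orbits $N(S)$ is everything. Hence the matching exists.

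For ``(2) $\Rightarrow$ (3)'' I would again use the contrapositive. Assume $\affapp(G)\leq 1$ and pick $f$ with $\affapp(f)\leq 1$. By Lemma \ref{reductionLem} the replacement $x\mapsto f(1)^{-1}f(x)$, i.e.\ composition with the bijective affine map $\A_{f(1)^{-1},\id}$, leaves $\affapp(f)$ unchanged, so I may assume $f(1)=1$. I then claim $f$ is a bijective a.o.a.~function, contradicting (2). Injectivity: if $f(a)=f(b)$ with $a\neq b$, then $f(b)^{-1}f(a)=1=\varphi(b^{-1}a)$ for the trivial endomorphism $\varphi$, so by Lemma \ref{differenceLem} some affine map agrees with $f$ on $\{a,b\}$, contradicting $\affapp(f)\leq 1$; thus $f$ is injective, hence bijective. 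The a.o.a.~property: for $b\neq 1$, Lemma \ref{differenceLem} applied to $X=\{1,b\}$ shows that an affine map agrees with $f$ there iff some endomorphism $\varphi$ satisfies $\varphi(b)=f(b)$; since $\affapp(f)\leq 1$ rules this out, $f(b)$ is not an endomorphic image of $b$, in particular not an automorphic image, so $b$ and $f(b)$ lie in different automorphism orbits. Together with $f(1)=1$ this makes $f$ a bijective a.o.a.~function.

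The genuinely nontrivial step is the construction in ``$\neg(1)\Rightarrow\neg(2)$'': converting ``no orbit exceeds half of $|G|-1$'' into an explicit block-deranging permutation. I would resolve it via Hall's theorem as above, the crux being that the verification of Hall's condition collapses exactly to the inequality $n_i\leq n-n_i$ defining the absence of a dominating orbit; an equivalent route is to arrange $G\setminus\{1\}$ cyclically with no two neighbours in the same orbit (possible precisely because each orbit has size $\leq\lfloor n/2\rfloor$) and take the cyclic shift. By contrast the two remaining implications are short once one notices that the trivial endomorphism detects any non-injectivity of $f$ while the identity automorphism detects the orbit of $b$, so I expect those to be routine.
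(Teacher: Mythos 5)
Your proof is correct, and its combinatorial core takes a genuinely different route from the paper's. The paper isolates the equivalence of (1) and (2) in a standalone partition lemma (Lemma \ref{partitionLem}): a $\P$-avoiding permutation of a finite set $M$ exists if and only if no partition class exceeds $\frac{1}{2}|M|$, proved there by induction on $|M|$ (direct verification for $|M|\leq 5$, then removing one point from each of the two largest classes and adjoining the transposition of the removed pair to an inductively obtained permutation). You instead establish the hard direction $\neg(1)\Rightarrow\neg(2)$ via Hall's marriage theorem on the bipartite graph joining $x$ to $y$ exactly when they lie in different orbits; your verification of Hall's condition is complete and correct ($N(S)=M\setminus O_i$ for nonempty $S$ inside a single orbit $O_i$, so $|N(S)|=n-n_i\geq n_i\geq |S|$, and $N(S)=M$ once $S$ meets two orbits), and it makes visible exactly where the hypothesis $n_i\leq n/2$ enters. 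This buys a shorter, case-free argument at the price of invoking a heavier theorem, whereas the paper's induction is elementary and self-contained but needs base cases and a small size estimate in the inductive step; your cyclic-arrangement alternative is also sound. Your easy direction, injecting the dominating orbit into its complement under a bijective a.o.a.~map, matches the paper's pigeonhole argument. For (2) $\Rightarrow$ (3), your contrapositive formulation is logically the same as the paper's direct argument: both normalize to $f(1)=1$ via Lemma \ref{reductionLem}, and your use of the trivial endomorphism through Lemma \ref{differenceLem} to rule out non-injectivity is the same mechanism as the paper's appeal to constant maps being affine (an affine map whose endomorphism part is trivial is constant). Your conclusion that $f(b)$ is not even an \emph{endomorphic} image of $b$ is slightly stronger than the a.o.a.~property requires, which is harmless.
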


Note that having a dominating automorphism orbit is \emph{not} necessary for a finite group $G$ to satisfy $\affapp(G)\geq2$ (consider, for example, $G=\IZ/6\IZ$, and see Proposition \ref{smallProp} below). The following combinatorial lemma will be used in the proof of Proposition \ref{dominatingProp}:

\begin{lemma}\label{partitionLem}
For a partition $\Pcal$ on a nonempty finite set $M$, we say that a function $f$ on $M$ is \emph{$\Pcal$-avoiding} if and only if for all $x\in M$, $x$ and $f(x)$ lie in different partition classes from $\Pcal$. Then the following are equivalent:

\begin{enumerate}
\item One of the elements of $\Pcal$ is of size larger than $\frac{1}{2}|M|$.
\item There is no $\Pcal$-avoiding permutation on $M$.
\end{enumerate}
\end{lemma}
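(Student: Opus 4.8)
The plan is to prove the equivalence of the two conditions in Lemma \ref{partitionLem} by establishing both implications, handling the contrapositive for the harder direction. Throughout, write $n:=|M|$ and let the partition classes be $C_1,\ldots,C_k$ with sizes $c_1,\ldots,c_k$, so that $\sum_{i=1}^k c_i=n$.

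First I would prove \enquote{(1) $\Rightarrow$ (2)} by contraposition, which is the easier direction. Suppose some class, say $C_1$, has $c_1>\frac{1}{2}n$, and suppose toward a contradiction that $f$ is a $\P$-avoiding permutation. Since $f$ is $\P$-avoiding, every element of $C_1$ must be mapped \emph{outside} $C_1$, so $f[C_1]\subseteq M\setminus C_1$; but $|f[C_1]|=c_1>\frac{1}{2}n>n-c_1=|M\setminus C_1|$ since $f$ is injective, which is impossible. Hence no $\P$-avoiding permutation exists.

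The main obstacle is the converse \enquote{(2) $\Rightarrow$ (1)}, which I would also prove by contraposition: assuming $c_i\leq\frac{1}{2}n$ for every $i$, I must \emph{construct} a $\P$-avoiding permutation. The natural approach is to pick a cyclic ordering of $M$ in which consecutive elements always lie in distinct classes, i.e.\ arrange the $n$ elements around a circle so that no two adjacent positions (including the wrap-around pair) carry elements from the same class; the successor map along this circle is then a $\P$-avoiding permutation (the cyclic condition guarantees injectivity automatically, and the no-two-adjacent-equal condition guarantees $\P$-avoidance). The existence of such a circular arrangement is a classical fact: a multiset of colored items with color multiplicities $c_1,\ldots,c_k$ summing to $n$ can be arranged in a circle with no two equal colors adjacent precisely when $\max_i c_i\leq\frac{1}{2}n$ (for $n\geq 2$; the case $n=1$ having a single class of size $1\leq\frac12$ is vacuous and handled trivially by the identity, noting here $x$ and $f(x)=x$ would share a class, so in fact when $n=1$ condition (1) already holds and there is nothing to prove). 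I would prove the needed direction directly rather than citing it: order the classes so that $c_1\geq c_2\geq\cdots\geq c_k$, lay the $c_1$ elements of the largest class into alternating slots around the circle, and then fill the remaining slots with the other elements in nonincreasing order of class size; the hypothesis $c_1\leq\frac12 n$ ensures the largest class does not force two of its own members to be adjacent, and a short counting/greedy argument shows the remaining classes, each no larger than $c_1$, can be inserted so that no monochromatic adjacency arises.

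The crux is thus the combinatorial placement argument for step two, and the cleanest way to make it rigorous is the greedy \enquote{most frequent remaining color} strategy around the circle, together with the invariant that at each placement the color just used cannot be chosen again until at least one other color is placed, which is sustainable exactly because no color exceeds half the total. I expect the bookkeeping of the wrap-around adjacency (the last and first positions) to be the one subtle point, which I would dispatch by treating the placement as filling a linear sequence under the stronger requirement that also forbids reusing the very first color at the very last position whenever that color is the unique maximizer. Once the arrangement exists, defining $f$ as \enquote{send each element to its clockwise neighbor} completes the construction and hence the contrapositive, finishing the proof of the equivalence.
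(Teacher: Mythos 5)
Your proof is correct in substance but takes a genuinely different route from the paper on the hard direction. The implication \enquote{(1) $\Rightarrow$ (2)} is the same pigeonhole argument as in the paper. For \enquote{(2) $\Rightarrow$ (1)}, the paper argues by induction on $|M|$: it verifies the claim directly for $|M|\leq 5$, and for $|M|\geq 6$ removes one point from each of the two largest classes, checks that the \enquote{no class exceeds half} invariant survives the removal, and extends the inductively obtained avoiding permutation by the transposition of the two removed points. You instead invoke (and sketch a proof of) the classical seating fact that items with class multiplicities $c_1,\ldots,c_k$ summing to $n$ admit a circular arrangement with no two circularly adjacent items in the same class exactly when $\max_i c_i\leq\lfloor n/2\rfloor$, and take $f$ to be the clockwise successor map; this is valid, and it even yields the slightly stronger conclusion that a $\P$-avoiding \emph{$n$-cycle} exists, whereas the paper's construction is a product of transpositions grafted onto a small base case. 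The trade-off: the paper's induction is fully self-contained and the invariant check is two lines, while your route outsources the combinatorial core to the circular-arrangement theorem, and that is precisely where your write-up stays at sketch level. Your \enquote{greedy most frequent remaining color} variant with the ad hoc rule forbidding the first color at the last slot is the weak point --- the stated invariant is not actually proved, and the wrap-around is exactly where naive greedy arguments fail. If you flesh this out, drop the greedy in favor of the standard interleaving you also mention: sort classes by nonincreasing size, list the elements class by class, and place them into positions $0,2,4,\ldots$ and then $1,3,5,\ldots$ around the circle; a short index computation shows that a monochromatic circular adjacency would force some class to have more than $\lfloor n/2\rfloor$ elements (the wrap pair falls to two different classes automatically, even in the extremal case of two classes of size $n/2$). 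With that replacement your argument is complete and rigorous.
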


\begin{proof}
For \enquote{(1) $\Rightarrow$ (2)}: Let $P$ denote the unique element of $\Pcal$ of size larger than $\frac{1}{2}|M|$. Assume, by contradiction, that there is a $\Pcal$-avoiding permutation $f$ on $M$. Then $f$ would have to map $P$ injectively into the smaller set $M\setminus P$, which is impossible.

For \enquote{(2) $\Rightarrow$ (1)}: We show the contraposition of this implication, i.e., that there \emph{is} a $\Pcal$-avoiding permutation on $M$ if all partition classes from $\Pcal$ have size at most $\frac{1}{2}|M|$, by induction on $|M|$. To this end, one first verifies directly that this holds for $|M|\leq 5$. Now assume that $|M|\geq 6$. Note that $\Pcal$ must consist of at least two nonempty partition classes, and choose distinct $P_1,P_2\in\Pcal$ such that $|P_1|\geq|P_2|\geq|P|$ for all $P\in\Pcal\setminus\{P_1,P_2\}$. Fix $p_1\in P_1$ and $p_2\in P_2$, and set $M':=M\setminus\{p_1,p_2\}$ and $\Pcal':=\{P\setminus\{p_1,p_2\}\mid P\in\Pcal\}$. Then $\Pcal'$ is a partition of $M'$, and it still has the property that none of its members has size larger than $\frac{1}{2}|M'|$. Indeed, an element of $\Pcal'$ is either obtained from $P_1$ or $P_2$ by deleting an element and thus has size at most $|P_1|-1\leq\frac{1}{2}|M|-1=\frac{1}{2}(|M|-2)=\frac{1}{2}|M'|$, or it is equal to an element of $\Pcal$ distinct from $P_1$ and $P_2$, whence it can only have size at most $\frac{1}{3}|M|$, which is less than or equal to $\frac{1}{2}|M|-1$ by the assumption $|M|\geq 6$. Hence by the induction hypothesis, there exists a $\Pcal'$-avoiding permutation $g$ on $M'$, and it is clear that $f:=g\cup\{(p_1,p_2),(p_2,p_1)\}$ (less formally: the permutation on $M$ obtained by adding the transposition of $p_1$ and $p_2$ to $g$) is a $\Pcal$-avoiding permutation on $M$.
\end{proof}

\begin{proof}[Proof of Proposition \ref{dominatingProp}]
The equivalence of (1) and (2) follows immediately from Lemma \ref{partitionLem} with $M:=G\setminus\{1\}$ and $\Pcal$ the collection of nontrivial automorphism orbits of $G$.

For \enquote{(2) $\Rightarrow$ (3)}: Let $f$ be any function on $G$. We need to show that $f$ agrees with a suitable affine map on $G$ on some subset of $G$ of size at least $2$. Since all constant functions on $G$ are affine, this is clear if $f$ is \emph{not} a permutation on $G$, so assume that $f:G\rightarrow G$ is bijective. Composing $f$ with a suitable translation on $G$, we may, by Lemma \ref{reductionLem}, also assume w.l.o.g.~that $f(1)=1$. But since $G$ has no a.o.a.~permutations by assumption, it follows that for some $g\in G\setminus\{1\}$ and some automorphism $\alpha$ of $G$, $f(g)=\alpha(g)$. Hence $f$ agrees with $\alpha$ on $\{1,g\}$, and we are done.
\end{proof}

\begin{example}\label{dominatingEx}
We now give some applications of Proposition \ref{dominatingProp}, some of which will also be used later.

\begin{enumerate}
\item The alternating group $\Alt(4)$ has no universal element, whence $\enapp(\Alt(4))=0$ by Proposition \ref{zeroProp}. On the other hand, $\Alt(4)$ has a dominating automorphism orbit, namely the one consisting of elements of order $3$, which has size $8$. Hence $\affapp(\Alt(4))\geq 2$ by Proposition \ref{dominatingProp}.
\item In any finite dihedral group $\D_{2n}=\langle r,s\mid r^n=s^2=1,srs^{-1}=r^{-1}\rangle$ with $n\geq 3$, the \emph{reflections}, i.e., the elements of the form $sr^k$ with $k\in\{0,\ldots,n-1\}$, form a dominating automorphism orbit, so $\affapp(\D_{2n})\geq 2$ for all $n\geq 3$. Similarly, one shows that $\affapp(\Dic_{4n})\geq 2$ for $n\geq 2$. Note that if $n$ is odd, neither $\D_{2n}$ nor $\Dic_{4n}$ have an element of order their respective exponent, so in particular, they have no universal elements.
\item Let $p$ be an odd prime. It is known that for every finite extraspecial group $G=p_+^{1+2n}$ of exponent $p$, the non-central elements of $G$ form a single automorphism orbit of $G$; this can be seen, for example, by using \cite[Theorem 1(a)]{Win72a} and the facts that $\Sp_{2n}(p)$ acts transitively on $\IF_p^{2n}\setminus\{0\}$ and that, since $G/G'\cong\IF_p^{2n}$ and $G'=\zeta G\cong\IF_p$, the group of central automorphisms of $G$ acts transitively on each nontrivial coset of $\zeta G$. Clearly, $G\setminus\zeta G$ is a dominating automorphism orbit of $G$ (and also, every element of it is a universal element of $G$), so $\affapp(G)=\affapp(p_+^{1+2n})\geq 2$.
\end{enumerate}
\end{example}

By what we know so far, the following can be deduced easily:

\begin{proposition}\label{pCubedProp}
Let $p$ be a prime and $G$ a finite group of order $p^k$, $k\in\{1,2,3\}$. Then $\affapp(G)\geq 2$.
\end{proposition}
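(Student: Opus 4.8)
The plan is to reduce the claim to a finite case analysis driven by the classification of groups of order $p$, $p^2$, and $p^3$, and then simply invoke the criteria already established, rather than proving anything new. The organizing observation is that every group of order $p^k$ with $k\in\{1,2\}$ is abelian, and every group of order $p^3$ is either abelian or isomorphic to one of a short list of nonabelian groups, each of which has been treated either in Example \ref{dominatingEx} or via Corollary \ref{abelianCor}.

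First I would dispose of all the abelian cases at once. If $|G|=p$ then $G\cong\IZ/p\IZ$ is cyclic; if $|G|=p^2$ then $G$ is abelian (a standard fact for groups of order $p^2$, as the center is nontrivial and $G/\zeta G$ cannot be nontrivial cyclic); and if $|G|=p^3$ with $G$ abelian, then $G$ is one of $\IZ/p^3\IZ$, $\IZ/p^2\IZ\times\IZ/p\IZ$, or $(\IZ/p\IZ)^3$. In every such case $G$ is a nontrivial finite abelian group, so Corollary \ref{abelianCor} yields $\affapp(G)\geq 2$ immediately.

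It then remains to handle the nonabelian groups of order $p^3$, where I would split on the parity of $p$. For $p=2$, the two nonabelian groups of order $8$ are the dihedral group $\D_8=\D_{2\cdot 4}$ and the quaternion group $\Dic_8=\Dic_{4\cdot 2}$; both are covered by Example \ref{dominatingEx}(2), which gives $\affapp(\D_{2n})\geq 2$ for $n\geq 3$ and $\affapp(\Dic_{4n})\geq 2$ for $n\geq 2$. For odd $p$, there are exactly two nonabelian groups of order $p^3$ up to isomorphism, namely $G_p^{(1)}$ of exponent $p$ and $G_p^{(2)}$ of exponent $p^2$, and these are precisely the groups treated in Example \ref{dominatingEx}(3) and (4), each yielding $\affapp(G)\geq 2$.

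Since the argument merely stitches together results proved earlier, I do not expect a genuine technical obstacle; the only point requiring care is the \emph{completeness} of the case list. Concretely, I would want to cite the classification of groups of order $p^3$ carefully enough to confirm that the nonabelian groups of order $8$ are exhausted by $\D_8$ and $\Dic_8$, and that for odd $p$ the nonabelian groups of order $p^3$ are exhausted by $G_p^{(1)}$ and $G_p^{(2)}$, so that every group of order $p^k$ with $k\leq 3$ has indeed been accounted for by one of the three mechanisms above.
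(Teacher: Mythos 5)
Your proposal is correct and matches the paper's own (implicit) proof exactly: the paper derives Proposition \ref{pCubedProp} from Corollary \ref{abelianCor} for the abelian cases together with Example \ref{dominatingEx}(2)--(4) for the nonabelian groups of order $p^3$, precisely the case split you describe (with $\D_8$ and $\Dic_8$ for $p=2$, and $G_p^{(1)}$, $G_p^{(2)}$ for odd $p$). Your added care about the completeness of the classification of groups of order $p^3$ is a sensible point that the paper leaves tacit.
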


\begin{proof}
This follows from the classification of $p$-groups up to order $p^3$, Corollary \ref{abelianCor}, Example \ref{dominatingEx} and the following argument for the extraspecial groups $G=p_-^{1+2}$ of order $p^3$ and exponent $p^2$ (for $p$ an odd prime): Write $G=\langle x,t\mid x^{p^2}=t^p=1,txt^{-1}=x^{1+p}\rangle$. It is not difficult to check (using the above presentation of $G$) that $\Aut(G)$ acts transitively on the set of generators of $\langle x\rangle\cong\IZ/p^2\IZ$ and that there is a (surjective) group homorphism $G\rightarrow\IZ/p\IZ$ under which $x$ \enquote{survives} (i.e., is mapped to a generator of $\IZ/p\IZ$). Hence, as all elements of $G\setminus\langle x\rangle$ have order $p$, $x$ is a universal element in $p_-^{1+2}$, so that $\affapp(p_-^{1+2})\geq 2$ by Proposition \ref{universalProp}(1).
\end{proof}

Note, however, that not all nontrivial finite $p$-groups have $\affapp$-value at least $2$, as the examples by means of which we will prove Theorem \ref{mainTheo}(3) are of order $p^8$.

We can also say something about $\enapp$- and $\affapp$-values of finite cyclic groups:

\begin{lemma}\label{cyclicLem}
Let $n\in\IN^+$. Then the following hold:

\begin{enumerate}
\item $\enapp(\IZ/n\IZ)=1$.
\item If $n$ is a prime, then $\affapp(\IZ/n\IZ)=2$.
\item If $n=p^k$, for a prime $p$ and some $k\in\IN^+$, then $\affapp(\IZ/n\IZ)=\affapp(\IZ/p^k\IZ)\leq p$.
\end{enumerate}
\end{lemma}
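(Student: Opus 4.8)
The plan is to treat the three parts separately, in each case pairing the abelian lower bound of Corollary~\ref{abelianCor} with an explicit worst-case function. Throughout I use that the endomorphisms of $\IZ/n\IZ$ are exactly the multiplications $x\mapsto ax$, so $\Aff(\IZ/n\IZ)$ consists precisely of the maps $x\mapsto ax+b$; hence, for any $f$, $\enapp(f)=\max_a|\{x\mid f(x)=ax\}|$ and $\affapp(f)=\max_{a,b}|\{x\mid f(x)=ax+b\}|$, the latter being the largest fibre of $x\mapsto f(x)-ax$, maximised over $a$. For part~(1), $\enapp(\IZ/n\IZ)\geq 1$ is immediate (a generator is a universal element), and for the upper bound I would take $f(x)=x+1$: the equation $x+1=ax$ reads $(a-1)x\equiv 1\Mod{n}$, which has a solution only when $\gcd(a-1,n)=1$ and then exactly one, so $\enapp(f)\leq 1$. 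For part~(2), $\affapp(\IZ/p\IZ)\geq 2$ by Corollary~\ref{abelianCor}, while for the upper bound I would take $f(x)=x^2$: by the reduction, $\affapp(x^2)$ is the largest fibre of $x\mapsto x^2-ax$, and over the field $\IF_p$ the quadratic $x^2-ax-b$ has at most two roots, so $\affapp(x^2)\leq 2$.

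For part~(3) the same reduction shows the task is to build one $f$ on $\IZ/p^k\IZ$ for which $x\mapsto f(x)-ax$ is at most $p$-to-one for every $a$. My plan is an induction on $k$, with the base cases $k\leq 2$ handled directly by $f(x)=x^2$ (for $k\leq 2$ the equation $x^2-ax-b\equiv 0\Mod{p^k}$ has at most $p$ solutions, the worst fibre being $\{x\mid x^2\equiv 0\}$ of size $p$). For the inductive step I would lift a good $f_{k-1}$ along the reduction $\IZ/p^k\IZ\twoheadrightarrow\IZ/p^{k-1}\IZ$, setting $f_k(x)=\widetilde{f_{k-1}}(\bar x)+p^{k-1}g(x)$, where $\bar x=x\bmod p^{k-1}$, $\widetilde{f_{k-1}}$ is the standard lift, and $g\colon\IZ/p^k\IZ\to\IZ/p\IZ$ is a top-digit function to be chosen. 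Any fibre of $x\mapsto f_k(x)-ax$ projects into a fibre of $x\mapsto f_{k-1}(x)-\bar a x$ (of size $\leq p$ by hypothesis), and over each point $c$ of the latter the $p$ lifts contribute exactly those $t\in\IZ/p\IZ$ with $g(c+p^{k-1}t)=a_0t-e_c$, where $a_0=a\bmod p$ and $e_c\in\IZ/p\IZ$ is a ``carry'' determined by $f_{k-1}$ and $a,b$.

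The main obstacle is precisely this final count: one must choose $g$ so that, summed over the up-to-$p$ points $c$, the number of matching $t$ stays $\leq p$ rather than blowing up to $p^2$. If $g$ is taken constant along the reduction fibres, equal to a single auxiliary $\gamma\colon\IZ/p\IZ\to\IZ/p\IZ$, the total becomes $\sum_c|\{t\mid\gamma(t)-a_0t=-e_c\}|$, and the identity $\sum_{e}|\{t\mid\gamma(t)-a_0t=-e\}|=p$ bounds this by $p$ as soon as the carries $e_c$ are pairwise distinct on each lower fibre. Establishing this distinctness is the crux, and I expect it to force a strengthening of the induction hypothesis recording a non-degeneracy condition on the second $p$-adic digit of $f_{k-1}$; the case $p=2$ will need separate bookkeeping, since there the auxiliary $\gamma$ cannot be a genuine ``parabola''. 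As a sanity check ruling out shortcuts, I would note that the naive global choices fail for $k\geq 3$: $f=x^2$ acquires a fibre of size $2p$ over a suitable square (e.g.\ $x^2\equiv p^2\Mod{p^3}$), and $f=x^p$ (for $p$ odd) sends every multiple of $p$ to $0$, so a genuinely $p$-adic construction of this type is unavoidable.
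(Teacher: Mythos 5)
Your parts (1) and (2) are correct. For (1), your $f(x)=x+1$ is even a little slicker than the paper's function (which sends non-generators to a fixed generator and each generator $x$ to $x^2$): for every endomorphism $x\mapsto ax$ the agreement set $\{x\mid (a-1)x\equiv 1\Mod{n}\}$ has at most one element, and the lower bound is Corollary~\ref{abelianCor}, exactly as in the paper. Part (2) is literally the paper's argument ($x\mapsto x^2$ and the root count for degree-$2$ polynomials over $\IF_p$).

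Part (3), however, is where the substance of the lemma lies, and there your proposal has a genuine, self-acknowledged gap: the entire inductive lifting scheme hinges on the carries $e_c$ being pairwise distinct over the (up to $p$) points $c$ of a lower fibre, and you neither prove this nor exhibit the strengthened induction hypothesis that would deliver it; the $p=2$ case is also deferred. As it stands this is a plan with its crux open, not a proof, and it is not clear the plan closes: nothing in your setup prevents several lower-fibre points from producing the same carry for some pair $(a,b)$, in which case your count $\sum_c|\{t\mid\gamma(t)-a_0t=-e_c\}|$ can exceed $p$ for the bad residues $a_0$, and forcing carry-injectivity on \emph{every} fibre for \emph{every} $(a,b)$ is a strong non-degeneracy condition that the induction would have to propagate. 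The paper avoids all of this with one direct construction: write each $x\in\{0,\ldots,p^k-1\}$ as $x=\rem(x)+p\cdot\quot(x)$ with $\rem(x)\in\{0,\ldots,p-1\}$ and set $f(x):=\rem(x)+\quot(x)$. If $f$ agreed with an affine map on a set $X$ with $|X|\geq p+1$, the pigeonhole principle gives distinct $x,y\in X$ with $\rem(x)=\rem(y)$, and then Lemma~\ref{differenceLem} would force some endomorphism of $\IZ/p^k\IZ$ to map $x-y=p\cdot(\quot(x)-\quot(y))$ to $f(x)-f(y)=\quot(x)-\quot(y)$, which is impossible because $0<|\quot(x)-\quot(y)|<p^{k-1}$ makes $\ord(\quot(x)-\quot(y))$ strictly larger than $\ord(p\cdot(\quot(x)-\quot(y)))$, and endomorphisms cannot increase element orders. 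In effect the paper replaces your digit-by-digit carry analysis by a single application of the difference criterion plus an order obstruction, uniformly in $p$ (including $p=2$) and with no induction. Your sanity checks are sound, by the way: $x^2$ does acquire an agreement set of size $2p$ for $k\geq 3$, so \emph{some} genuinely $p$-adic construction is needed; it is just that the needed one is this one-line digit construction rather than a lifting argument.
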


\begin{proof}
For (1): Note that by Corollary \ref{abelianCor}, $\enapp(\IZ/n\IZ)\geq 1$, so it suffices to give an example of a function $f$ on $\IZ/n\IZ$ such that $\enapp(f)\leq 1$. To this end, fix a generator $g$ of $\IZ/n\IZ$, and consider the following function $f$ on $\IZ/n\IZ$: It maps all non-generators of $\IZ/n\IZ$ (i.e., elements that generate a proper subgroup) to $g$, and it maps each generator $x$ of $\IZ/n\IZ$ to $x^2$ (squaring modulo $n$). Then any set $X\subseteq\IZ/n\IZ$ on which $f$ agrees with some endomorphism $\varphi$ of $\IZ/n\IZ$, say $\varphi(t)=a\cdot t$ for a suitable fixed $a\in\IZ/n\IZ$ and all $t\in\IZ/n\IZ$, cannot contain any non-generators, and it also cannot contain two distinct generators $x_1,x_2$, since that would imply $x_1^2=f(x_1)=\varphi(x_1)=ax_1$, and thus $a=x_1$, although one can analogously show $a=x_2$ as well.

For (2): Again by Corollary \ref{abelianCor}, we have $\affapp(\IZ/n\IZ)\geq 2$, so it suffices to give an example of a function $f$ on $\IZ/n\IZ$ with $\affapp(f)\leq 2$. Let $f$ be the square function of the ring $\IZ/n\IZ$. Note that any fixed affine map on $\IZ/n\IZ$ is of the form $x\mapsto ax+b$ with $a,b\in\IZ/n\IZ$ fixed, so that the elements of $\IZ/n\IZ$ on which $f$ and that affine map agree are the solutions to the quadratic equation $x^2=ax+b$ in the ring $\IZ/n\IZ$. Since $n$ is a prime, that ring is a field, whence each such equation has at most $2$ solutions in $\IZ/n\IZ$, as required.

For (3): We give an example of a function $f$ on $\IZ/p^k\IZ$ such that $\affapp(f)\leq p$. To define $f$, take as the underlying set of the group $\IZ/p^k\IZ$ the set of standard representatives $0,1,\ldots,p^k-1$ of the integer residue classes modulo $p^k$, and as the group operation addition modulo $p^k$. By integer division, every element $x$ of $\IZ/p^k\IZ$ can be uniquely written as $\rem(x)+\quot(x)\cdot p$ with $\rem(x)\in\{0,1,\ldots,p-1\}$ and $\quot(x)\in\{0,1\ldots,p^{k-1}-1\}$. Then we define $f$ through $f(x):=\rem(x)+\quot(x)$. Let us argue why $f$ cannot agree with an affine map of $\IZ/p^k\IZ$ on any subset $X$ with $|X|\geq p+1$. Indeed, such a subset $X$ contains two distinct elements $x$ and $y$ such that $\rem(x)=\rem(y)$, say w.l.o.g.~$x\geq y$ in $\IZ$. Then by Lemma \ref{differenceLem}, it would follow that some endomorphism $\varphi$ of $\IZ/p^k\IZ$ maps $x-y=p\cdot(\quot(x)-\quot(y))$ to $f(x)-f(y)=\quot(x)-\quot(y)$, which is impossible, because the order in $\IZ/p^k\IZ$ of $\quot(x)-\quot(y)$ is strictly larger than the order of $p\cdot(\quot(x)-\quot(y))$.
\end{proof}

We are now ready for determining the precise $\enapp$- and $\affapp$-values of groups of order up to $7$:

\begin{proposition}\label{smallProp}
The precise values of $\enapp(G)$ and $\affapp(G)$ for finite groups $G$ with $|G|\leq 7$ are as in Table \ref{table1}.

\begin{table}[H]
\caption{List of $\enapp(G)$ and $\affapp(G)$ for $|G|\leq7$.}
\label{table1}
\begin{center}
\begin{tabular}{|c|c|c|c|c|c|c|c|c|c|}\hline
$G$ & $\{1\}$ & $\IZ/2\IZ$ & $\IZ/3\IZ$ & $\IZ/4\IZ$ & $(\IZ/2\IZ)^2$ & $\IZ/5\IZ$ & $\IZ/6\IZ$ & $\Sym(3)$ & $\IZ/7\IZ$ \\ \hline
$\enapp(G)$ & $1$ & $1$ & $1$ & $1$ & $2$ & $1$ & $1$ & $0$ & $1$ \\ \hline
$\affapp(G)$ & $1$ & $2$ & $2$ & $2$ & $3$ & $2$ & $2$ & $2$ & $2$ \\ \hline
\end{tabular}
\end{center}
\end{table}
\end{proposition}

\begin{proof}[Proof of Proposition \ref{smallProp}]
By Lemma \ref{cyclicLem}, all the assertions on $\enapp(G)$ and $\affapp(G)$ in the cases where $G$ is cyclic are clear except for the one assertion $\affapp(\IZ/6\IZ)=2$. To see that this holds, it suffices to give an example of a function $f$ on $G=\IZ/6\IZ$ such that $\affapp(f)=2$. Using that $\IZ/6\IZ\cong\IZ/2\IZ\times\IZ/3\IZ$, we write the elements of $G$ as $(x,y)$ with $x\in\{0,1\}$ and $y\in\{0,1,2\}$. Consider the following function $f$ on $G$, which is a kind of \enquote{component swap}: $f(x,y):=(y\mod{2},x)$ for all $x\in\{0,1\}$ and $y\in\{0,1,2\}$. We argue why $f$ cannot agree with any affine function of $G$ on any subset of size $3$.

Note that since the component subgroups $\IZ/2\IZ$ and $\IZ/3\IZ$ of $G$ are fully invariant, any affine map of $G$ can be written as a product (in the sense of component-wise application) of an affine map on $\IZ/2\IZ$ and an affine map on $\IZ/3\IZ$. Consequently, any affine map of $G$ maps pairs with the same first (resp.~second) coordinate to pairs whose first (resp.~second) coordinates agree as well. But by its definition, $f$ never maps distinct pairs with the same second coordinate (which are necessarily of the form $(0,x)$ and $(1,x)$ for some $x\in\{0,1,2\}$) to such pairs. Hence if there are any three pairwise distinct elements $(x_1,y_1),(x_2,y_2),(x_3,y_3)$ of $G$ on which $f$ agrees with some affine map, then their second coordinates must be pairwise distinct, so that we can assume w.l.o.g.~that $y_1=0$, $y_2=1$ and $y_3=2$.

So it remains to show that no affine map $A=\A_{g,\varphi}$ on $G$ can show the following mapping behavior for some $x_1,x_2,x_3\in\{0,1\}$: $(x_1,0)\mapsto (0,x_1),(x_2,1)\mapsto(1,x_2)$ and $(x_3,2)\mapsto(0,x_3)$. If $\{x_1,x_2,x_3\}=\{0,1\}$, then choosing $p_1,p_2\in\{(x_1,0),(x_2,1),(x_3,2)\}$ with the same first coordinate and using that by the proof of Lemma \ref{differenceLem}, $\varphi(p_1-p_2)=f(p_1)-f(p_2)$, we see that $\varphi_{\mid\IZ/3\IZ}$ is trivial, and thus $A_{\mid\IZ/3\IZ}$ is constant, contradicting the fact that in the images of the three pairs $(x_1,0),(x_2,1)$ and $(x_3,2)$, there appear two distinct values in the second coordinates. Hence $x_1=x_2=x_3$ so that the three images would need to have the same first coordinate, which is not the case, the final contradiction.

We now turn to the two non-cyclic groups in the list: $(\IZ/2\IZ)^2$ and $\Sym(3)\cong\D_6$.

For $G=(\IZ/2\IZ)^2$, write the elements of $G$ as $(x,y)$ with $x,y\in\{0,1\}$, and note that by Corollary \ref{homogeneousCor}, $\enapp(G)\geq 2$ and $\affapp(G)\geq 3$. It is easy to check that $\enapp(f)=2$ for the following function $f$ on $(\IZ/2\IZ)^2$:

\begin{table}[H]
\caption{A function $f$ on $(\IZ/2\IZ)^2$ with $\enapp(f)=2$.}
\label{table2}
\begin{center}
\begin{tabular}{|c|c|c|c|c|}\hline
$x$ & $(0,0)$ & $(1,0)$ & $(0,1)$ & $(1,1)$ \\ \hline
$f(x)$ & $(1,0)$ & $(1,0)$ & $(0,1)$ & $(0,1)$ \\ \hline
\end{tabular}
\end{center}
\end{table}

For $\affapp(G)=3$, we only need to argue that there is some non-affine function on $G$, which is clear, since $G$ has precisely $2^{2^2}=16$ endomorphisms, thus precisely $4\cdot 16=64$ affine maps, but there are $4^4=256$ functions on $G$ altogether.

Finally, for $G=\Sym(3)\cong\D_6$, we note that $\enapp(G)=0$ holds by Proposition \ref{zeroProp}, since $G$ has no elements of order $\exp(G)=6$ and thus no universal elements. Moreover, $\affapp(G)\geq 2$ by Example \ref{dominatingEx}(2), so it suffices to give an example of a function $f$ on $G$ such that $\affapp(f)=2$. Since $G=\D_6=\langle r,s\mid r^3=s^2=1,srs^{-1}=r^{-1}\rangle$, we can write the elements of $G$ in normal form as $1,r,r^2,s,sr,sr^2$. We define $f$ via the following table:

\begin{table}[H]
\caption{A function $f$ on $\D_6$ with $\affapp(f)=2$.}
\label{table3}
\begin{center}
\begin{tabular}{|c|c|c|c|c|c|c|}\hline
$x$ & $1$ & $r$ & $r^2$ & $s$ & $sr$ & $sr^2$ \\ \hline
$f(x)$ & $1$ & $r$ & $r$ & $1$ & $r^2$ & $r^2$ \\ \hline
\end{tabular}
\end{center}
\end{table}

Let $X\subseteq G$ with $|X|=3$. We show by contradiction that $f$ cannot agree with any affine map $A=\A_{g,\varphi}$ of $G$ on $X$. First, consider the case when $X$ consists only of rotations, i.e., $X=\{1,r,r^2\}$. Then if $f_{\mid X}=A_{\mid X}$, we would in particular have $A(1)=1$ and thus that $A=\varphi$ is an endomorphism of $G$, but then the mapping behavior of $A$ on $\{r,r^2\}$ is contradictory. Next, consider the case when $X$ contains both a rotation $r^k$ and a reflection $sr^l$ for suitable $k,l\in\{0,1,2\}$. Then by the proof of Lemma \ref{differenceLem}, $\varphi(sr^{k+l})=\varphi(r^{-k}sr^l)=f(r^k)^{-1}f(sr^l)\in\langle r\rangle$, which is only possible if $\varphi$ is the trivial endomorphism of $G$ so that $A$ is constant. But by the definition of $f$, $A$ assumes at least two distinct values on the three elements of $X$, another contradiction. The only case left is when $X$ only consists of reflections, i.e., $X=\{s,sr,sr^2\}$. Denoting by $\mu_s$ the left translation by $s$ on $G$, we get in this case that $A\circ\mu_s$ is an affine map of $G$ showing the following mapping behavior: $1\mapsto 1,r\mapsto r^2,r^2\mapsto r^2$. From this, we can derive a contradiction like we did in the case $X=\{1,r,r^2\}$.
\end{proof}

\section{On Theorem \ref{mainTheo}(1): Upper bounds on \texorpdfstring{$\enapp(G)$}{endapp(G)} and \texorpdfstring{$\affapp(G)$}{affapp(G)}}\label{sec3}

Recall the general approximability notion $\app_{\F}(M_1,M_2)$ from Definition \ref{approxDef}. We will now show the following general combinatorial lemma:

\begin{lemma}\label{genAppLem}
Let $M_1$ and $M_2$ be finite sets of cardinality at least $2$, $f:\IN\rightarrow\left(0,\infty\right)$ and $\F$ a set of functions $M_1\rightarrow M_2$ such that $|\F|\leq|M_2|^{f(|M_1|)}$. Then

\[
\app_{\F}(M_1,M_2)\leq\max\{\e^2\cdot\frac{|M_1|}{|M_2|},f(|M_1|)\log{|M_2|}+\log{|M_1|}\}.
\]
\end{lemma}

\begin{proof}
Consider the Hamming metric $\dist$ on the set $M_2^{M_1}$ of all functions $M_1\rightarrow M_2$, defined as follows:
\[
\dist(f,g):=|\{x\in M_1\mid f(x)\not=g(x)\}|.
\]
For $k\in\IN$ and $\G\subseteq M_2^{M_1}$, denote by
\[
\N_k(\G):=\{h\in M_2^{M_1}\mid \exists g\in\G: \dist(g,h)\leq k\}
\]
the \emph{$k$-neighborhood of $\G$ in $M_2^{M_1}$ with respect to $\dist$}. For $g\in M_2^{M_1}$, we also write $\N_k(g)$ instead of $\N_k(\{g\})$ for the \emph{$k$-ball around $g$}, and we denote by
\[
\C_k(g):=\{h\in M_2^{M_1}\mid \dist(g,h)=k\}
\]
the \emph{$k$-circle around $g$}. $|\N_k(g)|$ and $|\C_k(g)|$ do not depend on $g$; indeed,
\[
|\C_k(g)|={|M_1| \choose k}\cdot (|M_2|-1)^k,
\]
and
\[
|\N_k(g)|=\sum_{i=0}^k{|\C_i(g)|}=\sum_{i=0}^k{{|M_1| \choose i}\cdot (|M_2|-1)^i}.
\]
Henceforth, we will denote by $\nu_k$ resp.~$\gamma_k$ the size of the $k$-ball resp.~$k$-circle around any given function $M_1\rightarrow M_2$.

The proof is based on the following observation: For each $k\in\{0,\ldots,|M_1|-1\}$, $\app_{\F}(M_1,M_2)\leq|M_1|-(k+1)$ is equivalent to the inclusion $\N_k(\F)\subseteq M_2^{M_1}$ being proper. We thus want to show $|\N_k(\F)|<|M_2^{M_1}|=|M_2|^{|M_1|}$ for $k$ as large as possible. Now
\[
|\N_k(\F)|=|\bigcup_{g\in\F}{\N_k(g)}|\leq|\F|\cdot\nu_k\leq|M_2|^{f(|M_1|)}\cdot\sum_{i=0}^k{\gamma_i}\leq|M_2|^{f(|M_1|)}\cdot|M_1|\cdot\max_{i=0,\ldots,k}{\gamma_i},
\]
and so, setting $L:=\log_{|M_2|}(|M_1|)$, for $|\N_k(\F)|<|M_2|^{|M_1|}$ to hold, it is sufficient to have
\[
\gamma_i={|M_1| \choose i}\cdot (|M_2|-1)^i<|M_2|^{|M_1|-f(|M_1|)-L}
\]
for $i=0,\ldots,k$. We make the ansatz $i=|M_1|-\ell$ and transform the substituted inequality
\begin{equation}\label{eq1}
{|M_1| \choose |M_1|-\ell}\cdot (|M_2|-1)^{|M_1|-\ell}<|M_2|^{|M_1|-f(|M_1|)-L}
\end{equation}
to obtain a (preferably small) lower bound on $\ell$, which is also an upper bound on $\app_{\F}(M_1,M_2)$. Using that
\[
{|M_1| \choose |M_1|-\ell}={|M_1| \choose \ell}\leq\frac{|M_1|^{\ell}}{\ell!}=\frac{|M_2|^{L\cdot\ell}}{\ell!},
\]
we see that for Formula (\ref{eq1}) to hold, it is sufficient to have
\begin{equation}\label{eq2}
|M_2|^{f(|M_1|)+\ell(L-1)+L}<\ell!.
\end{equation}
Now $\ell!>(\ell/\e)^{\ell}$ (see, for instance, \cite{Tao10a}), so Formula (\ref{eq2}) is implied by
\[
|M_2|^{f(|M_1|)+\ell(L-1)+L}\leq(\frac{\ell}{\e})^{\ell},
\]
which by taking logarithms on both sides and bringing all summands involving $\ell$ as a factor on one side is equivalent to
\begin{equation}\label{eq3}
(f(|M_1|)+L)\cdot\log{|M_2|}\leq \ell\cdot(\log{\ell}-1-(L-1)\log{|M_2|}).
\end{equation}
Finally, we note that for Formula (\ref{eq3}) to hold, it is sufficient to have both
\[
\ell\geq(f(|M_1|)+L)\cdot\log{|M_2|}
\]
and
\[
\log{\ell}-1-(L-1)\log{|M_2|}\geq 1,
\]
i.e.,
\[
\ell\geq\max\{(f(|M_1|)+L)\cdot\log{|M_2|},\e^2\cdot|M_2|^{L-1}\}=\max\{(f(|M_1|)+L)\cdot\log{|M_2|},\e^2\cdot\frac{|M_1|}{|M_2|}\}.
\]
This concludes the proof.
\end{proof}

\begin{proof}[Proof of Theorem \ref{mainTheo}(1)]
For $|G|=2,\ldots,7$, the validity of the asserted inequalities can be checked case by case using Proposition \ref{smallProp}, so we may assume that $|G|\geq 8$. Note that since endomorphisms of $G$ are determined by their values on any generating subset of $G$ and the size of a minimal (with respect to inclusion) generating subset of $G$ is always bounded from above by $\log_2{|G|}$ due to Lagrange's theorem, we have $|\End(G)|\leq|G|^{\log_2{|G|}}$ and $|\Aff(G)|=|G|\cdot|\End(G)|\leq|G|^{1+\log_2{|G|}}$.

We can therefore apply Lemma \ref{genAppLem}(2) with $M_1:=M_2:=G$ and $f:x\mapsto\log_2{x}$ (resp.~$f:x\mapsto 1+\log_2{x}$) to conclude that
\[
\enapp(G)\leq\max\{\e^2,\log_2{|G|}\log{|G|}+\log{|G|}\}=\max\{\e^2,(\frac{1}{\log{2}}+\frac{1}{\log{|G|}})\log^2{|G|}\}
\]
and
\[
\affapp(G)\leq\max\{\e^2,(1+\log_2{|G|)\log{|G|}+\log{|G|}}\}=\max\{\e^2,(\frac{1}{\log{2}}+\frac{2}{\log{|G|}})\log^2{|G|}\}.
\]
As it is easily checked that for all real numbers $x\geq 8$, one has
\[
(\frac{1}{\log{2}}+\frac{1}{x})\log^2{x}\geq\e^2,
\]
the asserted upper bounds on $\enapp(G)$ and $\affapp(G)$ follow from the just derived inequalities.
\end{proof}

\section{On Theorem \ref{mainTheo}(3): Finite groups \texorpdfstring{$G$}{G} minimizing both \texorpdfstring{$\enapp(G)$}{endapp(G)} and \texorpdfstring{$\affapp(G)$}{affapp(G)}}\label{sec4}

The following finite groups are the ones studied by Jonah and Konvisser in \cite{JK75a}, as mentioned in the Introduction:

\begin{definition}\label{jkDef}
Let $p$ be a prime, $\lambda=(\lambda_1,\lambda_2)$ an element of the set $\{(1,0),(0,1),\linebreak[4](1,1),\ldots,(p-1,1)\}$ of representatives of the $1$-dimensional subspaces of $\IF_p^2$. The \emph{JK-group} $\J_{p,\lambda}$ is defined as the $p$-group of nilpotency class $2$ generated by $4$ elements $a_1,a_2,b_1,b_2$ subject to the following additional relations:
\[
a_1^p=[a_1,b_1], a_2^p=[a_1,b_1^{\lambda_1}b_2^{\lambda_2}], b_1^p=[a_2,b_1b_2], b_2^p=[a_2,b_2], [a_1,a_2]=[b_1,b_2]=1.
\]
\end{definition}

We now collect some basic facts on the $\J_{p,\lambda}$, which Jonah and Konvisser already used in their proof that $\Aut(\J_{p,\lambda})$ is abelian. It is elementary to check that every element of $\J_{p,\lambda}$ has a unique normal form representation as

\[
a_1^{k_1}a_2^{k_2}b_1^{\ell_1}b_2^{\ell_2}[a_1,b_1]^{r_1}[a_1,b_2]^{r_2}[a_2,b_1]^{r_3}[a_2,b_2]^{r_4}
\]

with $k_1,k_2,\ell_1,\ell_2,r_1,\ldots,r_4\in\{0,\ldots,p-1\}$, so that $\J_{p,\lambda}$ is a special $p$-group of order $p^8$ with $\zeta\J_{p,\lambda}=\J_{p,\lambda}'$ elementary abelian of order $p^4$ with $\IF_p$-basis $[a_1,b_1],[a_1,b_2],[a_2,b_1],[a_2,b_2]$. The central quotient of $\J_{p,\lambda}$ is elementary abelian of order $p^4$ too, with $\IF_p$-basis the images of $a_1,a_2,b_1,b_2$ under the canonical projection.

Jonah and Konvisser showed that all automorphisms of $\J_{p,\lambda}$ are central, i.e., of the form $g\mapsto (\id+\varphi)(g):=g\varphi(g)$ for some fixed homomorphism $\varphi:\J_{p,\lambda}\rightarrow\zeta\J_{p,\lambda}$ (and conversely, all such maps on $\J_{p,\lambda}$ actually are automorphisms, so that the automorphisms of $\J_{p,\lambda}$ are completely understood), which immediately implies that any two automorphisms of $\J_{p,\lambda}$ commute, since $\im(\varphi)\leq\zeta\J_{p,\lambda}=\J_{p,\lambda}'\leq\ker(\varphi)$ for every homomorphism $\varphi:\J_{p,\lambda}\rightarrow\zeta\J_{p,\lambda}$, whence the composition of any two such homomorphisms is the trivial endomorphism of $\J_{p,\lambda}$.

The author's approach in \cite{Bor14a} to show that for \enquote{most} of the $\J_{p,\lambda}$, even any two \emph{endo}morphisms of $\J_{p,\lambda}$ commute, is analogous to the one of Jonah and Konvisser, i.e., it essentially consists of gaining a complete understanding of all endomorphisms of those $\J_{p,\lambda}$ in the form of the following key lemma:

\begin{lemma}\label{keyLem}
Let $p$ be an odd prime, $\lambda_1\in\{0,\ldots,p-1\}$, $\lambda_2:=1$ and $\lambda:=(\lambda_1,\lambda_2)$. Then every endomorphism of $\J_{p,\lambda}$ which is not an automorphism is a homomorphism $\J_{p,\lambda}\rightarrow\zeta\J_{p,\lambda}$. 
\end{lemma}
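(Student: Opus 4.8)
The plan is to exploit the special structure of $\J_{p,\lambda}$: it is a special $p$-group with $\zeta\J_{p,\lambda}=\J_{p,\lambda}'$ elementary abelian of order $p^4$, and the central quotient $\J_{p,\lambda}/\zeta\J_{p,\lambda}$ is elementary abelian of order $p^4$ with basis the images $\bar a_1,\bar a_2,\bar b_1,\bar b_2$. Let $\psi$ be an endomorphism of $\J_{p,\lambda}$. Since $\zeta\J_{p,\lambda}=\J_{p,\lambda}'$ is characteristic (indeed $\psi(\J_{p,\lambda}')\leq\J_{p,\lambda}'$ automatically, and $\psi$ maps the center into the center because commutators map to commutators), $\psi$ induces an $\IF_p$-linear endomorphism $\bar\psi$ of the $4$-dimensional space $V:=\J_{p,\lambda}/\zeta\J_{p,\lambda}$. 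My dichotomy will be driven by whether $\bar\psi$ is invertible. The first step is to show that if $\bar\psi$ is \emph{not} invertible, then $\psi$ is a homomorphism into $\zeta\J_{p,\lambda}$, and if $\bar\psi$ \emph{is} invertible, then $\psi$ is an automorphism; together these prove the lemma.

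First I would handle the easier implication. If $\bar\psi$ is invertible, then $\psi$ is surjective modulo the center, so $\im(\psi)\cdot\zeta\J_{p,\lambda}=\J_{p,\lambda}$; since $\zeta\J_{p,\lambda}=\J_{p,\lambda}'$ is contained in the Frattini subgroup $\Phi(\J_{p,\lambda})$, surjectivity modulo $\Phi$ forces $\psi$ to be surjective, and a surjective endomorphism of a finite group is an automorphism. Conversely, if $\psi$ maps into the center, then for all $x,y$ we get $\psi(xy)=\psi(x)\psi(y)$ with central values, and $\bar\psi=0$ is certainly non-invertible; so the content is entirely in proving the remaining direction, namely that \textbf{$\bar\psi$ singular forces $\im(\psi)\leq\zeta\J_{p,\lambda}$.} This is the heart of the matter and where I expect the real obstacle to lie.

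For that core step the plan is to use the defining relations to constrain $\bar\psi$ so strongly that singularity collapses it to zero. The key leverage is the relation package $a_1^p=[a_1,b_1]$, $a_2^p=[a_1,b_1^{\lambda_1}b_2^{\lambda_2}]$, $b_1^p=[a_2,b_1b_2]$, $b_2^p=[a_2,b_2]$: applying $\psi$ and comparing normal forms ties the \emph{$p$-th power map} on generators (which records how $\bar\psi$ acts, since $x^p$ for $x$ a generator lands in the center and depends only on $\bar\psi(x)$ through these relations) to the induced action of $\psi$ on $\J_{p,\lambda}'\cong\wedge^2 V$ via the commutator pairing. Concretely, writing $\bar\psi$ as a $4\times 4$ matrix over $\IF_p$ in the basis $\bar a_1,\bar a_2,\bar b_1,\bar b_2$, the restriction $\psi|_{\J_{p,\lambda}'}$ is essentially $\wedge^2\bar\psi$ on the commutator space, while the $p$-power relations impose a compatibility between $\bar\psi$ and $\wedge^2\bar\psi$ that is \emph{only} consistent when $\bar\psi$ is either invertible or zero. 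I would carry this out by writing down, for each of the four $p$-power relations, the equation it yields after applying $\psi$, collecting these into a system of quadratic equations in the entries of $\bar\psi$ (the quadratic nature coming from $\wedge^2\bar\psi$), and then showing algebraically that any singular solution is forced to vanish. This is exactly the type of computation Jonah and Konvisser performed for automorphisms, so I would reuse their normal-form bookkeeping and their explicit description of the commutator pairing.

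The main obstacle, as anticipated, is the singular-but-nonzero case: ruling out an endomorphism whose image modulo the center is a proper nonzero subspace. A rank-by-rank analysis of the quadratic system is the cleanest route — one shows that a rank $1$, $2$, or $3$ matrix $\bar\psi$ cannot satisfy the four $p$-power compatibility equations unless it is actually $0$. Here the specific arithmetic of $\lambda_2=1$ (and the hypothesis $p$ odd, needed so that $2$ is invertible when extracting $p$-th powers in a class-$2$ group via the Hall--Petrescu/collection formula $x^p y^p=(xy)^p[y,x]^{\binom p2}$, which is where odd $p$ enters to make $\binom p2\equiv 0\Mod p$) should make the rank $1,2,3$ cases contradictory. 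I expect the rank $2$ case to be the most delicate, since that is where a nontrivial kernel and a nontrivial image interact most intricately through the wedge map, and I would treat it by choosing a basis adapted to $\ker\bar\psi$ and tracking which commutators survive in $\im(\psi|_{\J_{p,\lambda}'})$, deriving the contradiction from the nondegeneracy built into the defining relations.
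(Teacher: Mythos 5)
Your reduction is sound and in fact tracks the paper's own setup quite closely. Writing $V:=\J_{p,\lambda}/\zeta\J_{p,\lambda}$, the dichotomy \enquote{$\bar\psi$ invertible versus singular} is exactly equivalent to \enquote{automorphism versus nontrivial kernel}, and your Frattini argument for the invertible case is correct, since $\zeta\J_{p,\lambda}=\J_{p,\lambda}'$ coincides with the Frattini subgroup here (the $p$-th power map sends $\J_{p,\lambda}$ onto $\J_{p,\lambda}'$). Your two compatibility mechanisms --- the action of $\psi$ on $\J_{p,\lambda}'$ induced by $\wedge^2\bar\psi$ through the commutator pairing, and the constraint tying the $p$-power map to $\bar\psi$ --- are precisely the paper's two propagation rules (its \enquote{taking brackets} implication and its $p$-power equivalence), just recast in matrix form. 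Your diagnosis of where $p$ odd enters, namely that $(xy)^p=x^py^p$ holds in this class-$2$ group because $\binom{p}{2}\equiv 0\Mod{p}$ and $\J_{p,\lambda}'$ has exponent $p$, is also correct, and since any assignment of generators with prescribed $\bar\psi$ extends to an endomorphism exactly when your quadratic system holds (central corrections are free, as central elements have order dividing $p$), your system does capture the lemma without overreach.

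The genuine gap is that the entire content of the lemma is concentrated in the one step you do not carry out: showing that a \emph{singular} $\bar\psi$ compatible with the relations must vanish. You defer this to a \enquote{rank-by-rank analysis of the quadratic system}, and even flag the rank-$2$ case as delicate without resolving it --- but this is not a routine verification, and the paper's proof consists almost entirely of exactly this point. The rigidity it exploits is concrete and worth naming: the $p$-power map induces a linear \emph{bijection} $P:V\rightarrow\zeta\J_{p,\lambda}$, given in coordinates by $(k_1,k_2,l_1,l_2)\mapsto(k_1+\lambda_1k_2,k_2,l_1,l_1+l_2)$; this is where $\lambda_2=1$ is indispensable (for $\lambda=(1,0)$ the analogous map is singular, which is why the lemma is restricted to $\lambda_2=1$), and it yields both that the elements of order dividing $p$ are precisely the central ones and the intertwining relation $\psi_{\mid\zeta\J_{p,\lambda}}\circ P=P\circ\bar\psi$. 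The paper then starts from a single nonzero vector of $\ker\bar\psi$ --- obtained as the image mod center of the unique $p$-th root of an order-$p$ element of $\ker(\psi)$ --- and \emph{iterates} the two rules, e.g.\ deriving kernel conditions of the form $(0,0,t_1,t_2-n\cdot t_1)$ for all $n$, through a case distinction on $(s_1,s_2,t_1,t_2)$, until one of $a_1,a_2,b_1$ is forced into the center; from there the defining relations cascade (if $\varphi(a_1)$ is central then $\varphi(a_2)^p=\varphi([a_1,b_1^{\lambda_1}b_2])=1$, so $\varphi(a_2)$ is central, and so on). Your static rank analysis, if executed, would amount to the same computation organized differently and would likely succeed; as written, however, the proposal proves only the easy direction and restates the hard one as a plan.
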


In other words, every endomorphism of such a $\J_{p,\lambda}$ is of one of the two forms $\varphi$ or $\id+\varphi$ for a homomorphism $\varphi:\J_{p,\lambda}\rightarrow\zeta\J_{p,\lambda}$, which implies the commutativity of $\End(\J_{p,\lambda})$ in a simple case distinction. It is also this complete understanding of the endomorphisms of \enquote{most} $\J_{p,\lambda}$ which will allow us to prove that both $\enapp(\J_{p,\lambda})=0$ and $\affapp(\J_{p,\lambda})=1$ (the latter via Lemma \ref{differenceLem}).

For the reader's convenience (and since it never appeared in a peer-reviewed publication before), we now recall the proof of Lemma \ref{keyLem} as in \cite{Bor14a}.

\begin{proof}[Proof of Lemma \ref{keyLem}]
Since $\J_{p,\lambda}$ is nilpotent of class $2$, $p$ is odd and $\J_{p,\lambda}'$ has exponent $p$, it follows that $\J_{p,\lambda}$ satisfies the identity $(xy)^p=x^py^p$ (see also \cite[5.3.5, p.~141]{Rob96a}). Using this, the defining relations and that $\lambda_2=1$ by assumption, it follows that
\begin{align}\label{powerEq}
&(a_1^{k_1}a_2^{k_2}b_1^{\ell_1}b_2^{\ell_2}[a_1,b_1]^{r_1}[a_1,b_2]^{r_2}[a_2,b_1]^{r_3}[a_2,b_2]^{r_4})^p= \notag \\
&[a_1,b_1]^{k_1+\lambda_1\cdot k_2}[a_1,b_2]^{k_2}[a_2,b_1]^{\ell_1}[a_2,b_2]^{\ell_1+l_2}.
\end{align}
But the (linear) map
\[
(\mathbb{Z}/p\mathbb{Z})^4\rightarrow(\mathbb{Z}/p\mathbb{Z})^4,(k_1,k_2,\ell_1,\ell_2)\mapsto (k_1+\lambda_1k_2,k_2,\ell_1,\ell_1+\ell_2),
\]
is a bijection. Hence by Formula (\ref{powerEq}), it follows that the elements of order a divisor of $p$ in $\J_{p,\lambda}$ are just those that lie in $\J_{p,\lambda}'=\zeta\J_{p,\lambda}$ and that each such element has precisely one $p$-th root in $\J_{p,\lambda}$ of the form $a_1^{k_1}a_2^{k_2}b_1^{\ell_1}b_2^{\ell_2}$.

Now let $\varphi$ be an endomorphism of $\J_{p,\lambda}$ with nontrivial kernel. Fix an element $x\in\ker(\varphi)$ of order $p$, and let $y=a_1^{s_1}a_2^{s_2}b_1^{t_1}b_2^{t_2}$, with $(s_1,s_2,t_1,t_2)\in\{0,\ldots,p-1\}^4\setminus\{(0,0,0,0)\}$, be a $p$-th root of $x$ in $\J_{p,\lambda}$. Then $\varphi$ maps $y$ to an element of $\J_{p,\lambda}$ of order a divisor of $p$, i.e., to an element of $\zeta\J_{p,\lambda}$.

Note that for showing $\im(\varphi)\subseteq\zeta\J_{p,\lambda}$, by the defining relations of $\J_{p,\lambda}$, it suffices to show that at least one of the three elements $a_1,a_2,b_1$ gets mapped into $\zeta\J_{p,\lambda}$ by $\varphi$. Moreover, for any element $g\in\J_{\lambda,p}$, if $\varphi(g)\in\zeta\J_{\lambda,p}$, then all commutators of the form $[h,g]$ with $h\in\J_{\lambda,p}$ are in $\ker(\varphi)$.

We now agree on the following notational conventions: \enquote{$(k_1,k_2,\ell_1,\ell_2)\mapsto\zeta$} is an abbreviation for \enquote{$\varphi(a_1^{k_1}a_2^{k_2}b_1^{\ell_1}b_2^{\ell_2})\in\zeta\J_{\lambda,p}$}, and \enquote{$(K_1,K_2,L_1,L_2)\mapsto 1$} abbreviates \enquote{$[a_1,b_1]^{K_1}[a_1,b_2]^{K_2}[a_2,b_1]^{L_1}[a_2,b_2]^{L_2}\in\ker(\varphi)$}. Then the following implications hold: Firstly, by \enquote{taking brackets} with the generators $a_1,a_2,b_1,b_2$,
\begin{align}\label{firstEq}
(k_1,k_2,\ell_1,\ell_2)\mapsto\zeta \Rightarrow &(\ell_1,\ell_2,0,0)\mapsto 1,(0,0,\ell_1,\ell_2)\mapsto 1,(k_1,0,k_2,0)\mapsto 1\hspace{3pt}\text{and} \notag \\
&(0,k_1,0,k_2)\mapsto 1.
\end{align}
Secondly, using Formula (\ref{powerEq}), the observation that an element is mapped into the center if and only if its $p$-th power is in the kernel of $\varphi$ translates to

\[
(k_1,k_2,\ell_1,\ell_2)\mapsto\zeta \Leftrightarrow (k_1+\lambda_1k_2,k_2,\ell_1,\ell_1+\ell_2)\mapsto 1,
\]

which is equivalent to

\begin{equation}\label{secondEq}
(K_1,K_2,L_1,L_2)\mapsto 1 \Leftrightarrow (K_1-\lambda_1K_2,K_2,L_1,L_2-L_1)\mapsto\zeta.
\end{equation}

Our assumption that $\varphi(y)\in\zeta\J_{p,\lambda}$ translates to $(s_1,s_2,t_1,t_2)\mapsto\zeta$. We now make a case distinction according to the values of $s_1,s_2,t_1,t_2$. First, assume that $t_1\not=0$. By Formula (\ref{firstEq}), we have $(0,0,t_1,t_2)\mapsto 1$, and by Formula (\ref{secondEq}), this implies $(0,0,t_1,t_2-t_1)\mapsto\zeta$. Applying Formula (\ref{firstEq}), we deduce from this that $(0,0,t_1,t_2-t_1)\mapsto 1$. Iteration of this argumentation yields $(0,0,t_1,t_2-n\cdot t_1)\mapsto 1$ for all $n\in\IN$, i.e., $(0,0,t_1,t)\mapsto 1$ for all $t\in\{0,\ldots,p-1\}$. In particular, $(0,0,t_1,0)\mapsto 1$, which by $t_1\not=0$ implies $(0,0,1,0)\mapsto 1$, and thus $(0,0,1,-1)\mapsto\zeta$ by Formula (\ref{secondEq}). Spelled out, this means $\varphi(b_1b_2^{-1})\in\zeta\J_{\lambda,p}$. But also, by an appropriate subtraction among the $(0,0,t_1,t_1-n\cdot t_2)\mapsto 1$, we derive $(0,0,0,1)\mapsto 1$, which by Formula (\ref{secondEq}) yields $(0,0,0,1)\mapsto\zeta$, or explicitly, $\varphi(b_2)\in\zeta\J_{\lambda,p}$. In combination with the already derived $\varphi(b_1b_2^{-1})\in\zeta\J_{\lambda,p}$, this yields $\varphi(b_1)\in\zeta\J_{\lambda,p}$, so that we are done in this case.

Now assume $t_1=0$. The subcase $s_2\not=0$ reduces to the first case, since Formula (\ref{firstEq}) yields $(s_1,0,s_2,0)\mapsto 1$, which in turn gives $(s_1,0,s_2,-s_2)\mapsto\zeta$ by Formula (\ref{secondEq}). Hence we can assume that $t_1=s_2=0$. But then another successive application of Formulas (\ref{firstEq}) and (\ref{secondEq}) yields $(s_1,0,0,0)\mapsto\zeta$, which in case $s_1\not=0$ implies $(1,0,0,0)\mapsto\zeta$, i.e., the sufficient $\varphi(a_1)\in\zeta\J_{\lambda,p}$. Hence we can assume $s_1=s_2=t_1=0$ and $t_2\not=0$. In this case, $(t_1,t_2,0,0)\mapsto 1$, valid by Formula (\ref{firstEq}), simplifies to $(0,t_2,0,0)\mapsto 1$, which by Formula (\ref{secondEq}) yields $(-\lambda_1t_2,t_2,0,0)\mapsto\zeta$ and hence reduces the situation to the case $t_1=0,s_2\not=0$ dealt with before.
\end{proof}

By Proposition \ref{zeroProp}, it is now clear that the $\J_{p,\lambda}$ with $p>2$ and $\lambda\not=(1,0)$ satisfy $\enapp(\J_{p,\lambda})=0$. Indeed, each $x\in\J_{p,\lambda}$ can only be mapped to elements from $\zeta\J_{p,\lambda}\cup x\zeta\J_{p,\lambda}\subsetneq\J_{p,\lambda}$ under endomorphisms of $\J_{p,\lambda}$ by Lemma \ref{keyLem} and Jonah and Konvisser's results, so that no such $x$ is universal in $\J_{p,\lambda}$. It remains to show $\affapp(\J_{p,\lambda})=1$, which is done by proving the following proposition:

\begin{proposition}\label{finalProp}
Let $p$ be an odd prime, $\lambda_1\in\{0,\ldots,p-1\}$, $\lambda_2:=1$, $\lambda:=(\lambda_1,\lambda_2)$, and $\sigma$ any fixed-point free automorphism of $(\IZ/p\IZ)^4=\IF_p^4$ (for example, $\sigma$ could be chosen as a Singer cycle). Denote, for $i=1,2,3,4$, by $\pi_i:(\IZ/p\IZ)^4\rightarrow\IZ/p\IZ$ the projection onto the $i$-th coordinate. Then the following function $f$ on $\J_{p,\lambda}$, defined on elements in normal form, satsfies $\affapp(f)=1$:

\begin{align*}
&f(a_1^{k_1}a_2^{k_2}b_1^{\ell_1}b_2^{\ell_2}[a_1,b_1]^{r_1}[a_1,b_2]^{r_2}[a_2,b_1]^{r_3}[a_2,b_2]^{r_4}):= \\
&a_1^{\pi_1(\sigma(k_1,k_2,\ell_1,\ell_1))}a_2^{\pi_2(\sigma(k_1,k_2,\ell_1,\ell_2))}b_1^{\pi_3(\sigma(k_1,k_2,\ell_1,\ell_2))}b_2^{\pi_4(\sigma(k_1,k_2,\ell_1,\ell_2))}\cdot \\
&[a_1,b_1]^{\pi_1(\sigma(r_1,r_2,r_3,r_4))}[a_1,b_2]^{\pi_2(\sigma(r_1,r_2,r_3,r_4))}[a_2,b_1]^{\pi_3(\sigma(r_1,r_2,r_3,r_4))}[a_2,b_2]^{\pi_4(\sigma(r_1,r_2,r_3,r_4))}.
\end{align*}
\end{proposition}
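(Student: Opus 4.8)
The plan is to prove $\affapp(f)=1$ by treating the two inequalities separately. The lower bound $\affapp(f)\geq 1$ is immediate, since every constant function lies in $\Aff(\J_{p,\lambda})$ and thus agrees with $f$ at (at least) one point. So the real work is $\affapp(f)\leq 1$, i.e.\ that $f$ agrees with no affine map on any $2$-element subset. By Lemma \ref{differenceLem}, $f$ agrees with some affine map on a set $\{x,y\}$ with $x\not=y$ if and only if there is an endomorphism $\varphi$ of $\J_{p,\lambda}$ with $\varphi(y^{-1}x)=f(y)^{-1}f(x)$. Writing $z:=y^{-1}x\not=1$ and $w:=f(y)^{-1}f(x)$, my goal reduces to showing that no endomorphism $\varphi$ satisfies $\varphi(z)=w$.

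The key structural input is the description of $\End(\J_{p,\lambda})$ from Lemma \ref{keyLem} together with Jonah and Konvisser's result: every endomorphism is either a homomorphism $\psi\colon\J_{p,\lambda}\to\zeta\J_{p,\lambda}$ or a central automorphism $\id+\psi$ with $\psi\colon\J_{p,\lambda}\to\zeta\J_{p,\lambda}$ a homomorphism. Since any such $\psi$ has image in the abelian group $\zeta\J_{p,\lambda}$, it kills $\J_{p,\lambda}'=\zeta\J_{p,\lambda}$. I would record the resulting dichotomy: on the central quotient $U:=\J_{p,\lambda}/\zeta\J_{p,\lambda}$ and on the centre $Z:=\zeta\J_{p,\lambda}$, each endomorphism induces either the identity map (the automorphisms $\id+\psi$, which fix $U$ and fix $Z$ since $\psi$ kills $Z$) or the zero map (the homomorphisms into $Z$, which have image in $Z$ and kill $Z=\J_{p,\lambda}'$).

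Next I would introduce coordinates: identify $U\cong\IF_p^4$ via the images of $a_1,a_2,b_1,b_2$ and $Z\cong\IF_p^4$ via $[a_1,b_1],[a_1,b_2],[a_2,b_1],[a_2,b_2]$, so that an element in normal form has an upper coordinate vector $\mathbf{k}\in\IF_p^4$ and a central coordinate vector $\mathbf{r}\in\IF_p^4$. By construction $f$ applies $\sigma$ to $\mathbf{k}$ and, independently, $\sigma$ to $\mathbf{r}$. Since the projection $\J_{p,\lambda}\to U$ is a homomorphism, the upper coordinate vector of $w$ equals $\sigma(\mathbf{k}_x)-\sigma(\mathbf{k}_y)=\sigma(\mathbf{k}_x-\mathbf{k}_y)$, which is exactly $\sigma$ applied to the upper coordinate vector of $z$; the class-$2$ commutator corrections are central and never interfere here. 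When $z$ is central (upper vector $0$), $x$ and $y$ share the same upper part, hence so do $f(x)$ and $f(y)$, and restricting to $Z$ shows that the central coordinate vector of $w$ is $\sigma(\mathbf{r}_x)-\sigma(\mathbf{r}_y)=\sigma(\mathbf{r}_x-\mathbf{r}_y)$, i.e.\ $\sigma$ applied to the (nonzero) central coordinate vector of $z$.

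With these computations in hand I would finish by a short case distinction. If the upper coordinate vector $\mathbf{t}$ of $z$ is nonzero, I compare upper vectors: an automorphism $\varphi$ forces the upper vector of $\varphi(z)$ to be $\mathbf{t}$, while $w$ has upper vector $\sigma(\mathbf{t})$, so $\varphi(z)=w$ would give $\sigma(\mathbf{t})=\mathbf{t}$, impossible as $\sigma$ is fixed-point free; a homomorphism-into-centre $\varphi$ forces the upper vector of $\varphi(z)$ to be $0\not=\sigma(\mathbf{t})$, using injectivity of $\sigma$. If instead $z$ is a nontrivial central element with central vector $\mathbf{s}\not=0$, I compare central vectors: an automorphism fixes $z$, giving central vector $\mathbf{s}\not=\sigma(\mathbf{s})$ (fixed-point freeness), while a homomorphism-into-centre sends $z$ to $1$, with central vector $0\not=\sigma(\mathbf{s})$ (injectivity). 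In every case $\varphi(z)\not=w$, so no affine map agrees with $f$ on two points and $\affapp(f)=1$. The conceptual heart is precisely the dichotomy that every endomorphism acts as the identity or the zero map on both $U$ and $Z$, played off against the fact that $f$ applies the single fixed-point-free (hence invertible, eigenvalue-$1$-avoiding) map $\sigma$ to both coordinate blocks; the one calculation to carry out carefully is the verification that the upper and central vectors of $w=f(y)^{-1}f(x)$ are obtained by applying $\sigma$ to the corresponding vectors of $z$.
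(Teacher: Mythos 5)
Your proof is correct and takes essentially the same route as the paper's: reduction via Lemma \ref{differenceLem}, the endomorphism dichotomy from Lemma \ref{keyLem} together with Jonah and Konvisser's result (every endomorphism acts as the identity or the zero map on both the central quotient and the centre), and the same two-case comparison of $z=y^{-1}x$ with $w=f(y)^{-1}f(x)$ played off against fixed-point freeness and injectivity of $\sigma$. Your explicit coordinate computation showing that the upper and central vectors of $w$ are $\sigma$ applied to those of $z$ simply spells out what the paper compresses into \enquote{by definition of $f$ and choice of $\sigma$}.
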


In other words, if we identify the elements of $\J_{p,\lambda}$ via their normal forms with octuples $(k_1,k_2,\ell_1,\ell_2,r_1,r_2,r_3,r_4)\in\IF_p^8$, then $f$ consists of applying $\sigma$ to both $(k_1,k_2,\ell_1,\ell_2)$ and $(r_1,r_2,r_3,r_4)$ and concatenating the resulting images.

\begin{proof}[Proof of Proposition \ref{finalProp}]
By Lemma \ref{differenceLem}, we need to show that there do not exist two distinct elements $x,y\in\J_{p,\lambda}$ such that some endomorphism of $\J_{p,\lambda}$ maps $y^{-1}x$ to $f(y)^{-1}f(x)$. We do so in a case distinction:

\begin{enumerate}
\item Case: $x,y$ lie in a common coset of $\zeta\J_{p,\lambda}$. Then by definition of $f$ and choice of $\sigma$, $y^{-1}x$ and $f(y)^{-1}f(x)$ are distinct nontrivial elements of $\zeta\J_{p,\lambda}$. By Jonah and Konvisser's results, any automorphism of $\J_{p,\lambda}$ is of the form $\id+\varphi$ with $\zeta\J_{p,\lambda}\leq\ker(\varphi)$, in particular fixes $\zeta\J_{p,\lambda}$ element-wise, and by Lemma \ref{keyLem}, any endomorphism of $\J_{p,\lambda}$ which is not an automorphism maps all elements of $\zeta\J_{p,\lambda}$ to $1$. Hence no endomorphism of $\J_{p,\lambda}$ can map $y^{-1}x$ to $f(y)^{-1}f(x)$ in this case, as required.
\item Case: $x,y$ lie in different cosets of $\zeta\J_{p,\lambda}$. Then by definition of $f$ and choice of $\sigma$, $y^{-1}x$ and $f(y)^{-1}f(x)$ lie in different cosets of $\zeta\J_{p,\lambda}$, both distinct from $\zeta\J_{p,\lambda}$ itself. Jonah and Konvisser's result that all automorphisms of $\J_{p,\lambda}$ are central just means that they leave all cosets of $\zeta\J_{p,\lambda}$ invariant, and by Lemma \ref{keyLem}, all other endomorphisms have their image contained in $\zeta\J_{p,\lambda}$, so no endomorphism mapping $y^{-1}x$ to $f(y)^{-1}f(x)$ can exist in this case either.
\end{enumerate}
\end{proof}

\section{Computation of \texorpdfstring{$\enapp(G)$}{endapp(G)} and \texorpdfstring{$\affapp(G)$}{affapp(G)} for small \texorpdfstring{$G$}{G}}\label{sec5}

This section can be seen as an extension of Proposition \ref{smallProp}. While Proposition \ref{smallProp}, just as everything so far, was based entirely on theoretical arguments, the results of this section are computational and were obtained using implementations of certain algorithms in GAP \cite{GAP4}.

Those algorithms are simple backtracking searches, which can be explained in a broader context. Let $X$ be a finite set of size $m$, and fix a repetition-free list $x_1,x_2,\ldots,x_m$ of the elements of $X$. Moreover, let $\F$ be a family of functions on $X$. In order to compute $\app_{\F}(X)$, we work with certain encodings $\tilde{f}$ of functions $f$ which map from a subset of $X$ of the form $\{x_1,x_2,\ldots,x_{\ell}\}$, for some $\ell\in\{1,\ldots,m\}$, to $X$. These encodings are defined as follows: $\tilde{f}$ is an $\ell$-tuple with entries in $\{1,\ldots,m\}$, and for all $i=1,\ldots,\ell$, if $j$ is the $i$-th entry of $\tilde{f}$, then $f(x_i)=x_j$.

For each ordered tuple $\tilde{f}$ with entries in $\{1,\ldots,m\}$ and of length $\ell\in\{1,2,\ldots,m\}$, we define the \emph{successor of $\tilde{f}$} as follows: If all entries of $f$ are equal to $m$, then the successor of $f$ is defined as $\emptyset$ (a dummy value). Otherwise, in order to obtain the successor of $\tilde{f}$ from $\tilde{f}$, keep removing the final entry until it is distinct from $m$, and then raise the final entry by $1$. For example, if $m=5$, then the successor of $(1,3,5,5)$ is $(1,4)$.

We now describe an algorithm which determines whether $\app_{\F}(X)\geq k$ for some given $k\in\{0,\ldots,m\}$. Initialize the variable $h$ as $(1)$. Then, as long as $h$ is not $\emptyset$, do the following: Writing $h=(h_1,\ldots,h_{\ell})$, check if, for some $f\in\F$, the number of $i\in\{1,\ldots,\ell\}$ such that $x_{h_i}=f(x_i)$ is at least $k$. If so, then replace $h$ by its successor. If not, then there are two cases to distinguish:
\begin{itemize}
\item If $\ell=m$, then $h$ encodes a function $g:X\rightarrow X$ such that $\app_{\F}(g)<k$, so that $\app_{\F}(X)<k$. We thus output \enquote{false} and halt.
\item If $\ell<m$, then extend $h$ by adding an entry $1$ at the end.
\end{itemize}
If this loop is exited because the condition $h\not=\emptyset$ is no longer satisfied, then halt and output \enquote{true}.

This algorithm always halts with the correct answer to the question of whether $\app_{\F}(X)\geq k$. Indeed, note that the following hold:
\begin{enumerate}
\item At the beginning of each iteration of the loop, if $h\not=\emptyset$, we have that $\app_{\F}(g)\geq k$ for all functions $g$ on $X$ whose restriction to $\{x_1,x_2,\ldots,x_{\ell}\}$ is encoded by an $\ell$-tuple from $\{1,\ldots,m\}^{\ell}$ which is lexicographically smaller than $h=(h_1,\ldots,h_{\ell})$.
\item At the end of an iteration of the loop, if we reach the command to replace $h$ by its successor (and begin the next iteration of the loop), we have that $\app_{\F}(g)\geq k$ for all functions $g$ on $X$ whose restriction to $\{x_1,\ldots,x_{\ell}\}$ is encoded by $h=(h_1,\ldots,h_{\ell})$.
\end{enumerate}
Together, this implies that \emph{if} the algorithm halts, it will output the correct answer. Indeed, this is clear if the halting is with output \enquote{false}. Moreover, if the halting is with output \enquote{true}, then this is because the loop was exited with $h=\emptyset$, which means that at the previous loop iteration, $h$ was of the form $(m,m,\ldots,m)$, of length $\ell$, say. But every $\ell$-tuple with entries in $\{1,2,\ldots,m\}$ is lexicographically smaller or equal to the $\ell$-tuple $(m,m,\ldots,m)$, and so in this situation, every function $g$ on $X$ must satisfy $\app_{\F}(g)\geq k$, so that the output \enquote{true} reflects the correct answer.

It remains to show that the algorithm always halts. But this is clear because after each iteration of the loop, one of the following happens:
\begin{itemize}
\item The algorithm halts with output \enquote{false};
\item $h$ is replaced by $\emptyset$ (so that the loop will be exited at the beginning of the next iteration and the algorithm will halt with output \enquote{true}); or
\item $h$ is replaced by a lexicographically larger tuple from the finite set $\bigcup_{\ell=1}^m{\{1,\ldots,m\}^{\ell}}$.
\end{itemize}

From this decision algorithm for $\app_{\F}(X)\geq k$, it is straightforward to obtain an algorithm for computing the precise value of $\app_{\F}(X)$, as the largest value of $k\in\{0,\ldots,m\}$ such that $\app_{\F}(X)\geq k$.

Now, in our setting, we have that $X$ is a finite group $G$, and the repetition-free ordering $g_1,\ldots,g_m$ of the elements of $G$ is the one obtained by evaluating the command

{\tt GeneratorsOfDomain(DomainByGenerators(G))}.

A complete list of all endomorphisms of $G$ can be obtained using GAP's built-in command {\tt AllEndomorphisms(G)}, and from this, one can also compute a list of all affine maps of $G$.

We can thus apply the above described backtracking algorithm in the special case $X=G$ and $\F\in\{\End(G),\Aff(G)\}$, with the slight modification that in case $\F=\Aff(G)$, the loop is also exited if the first entry of $h$ is greater than $1$ (this is because by Lemma \ref{reductionLem}, if any function $f$ on $G$ with $\affapp(f)<k$ exists, it can be chosen such that additionally, $f(g_0)=g_1$ for any given elements $g_0,g_1\in G$). We note that the corresponding GAP code written by the author is also available, alongside a short PDF documentation, from his homepage under \url{https://alexanderbors.wordpress.com/sourcecode/approx/}. This allows us to verify the following:

\begin{proposition}\label{smallProp2}
The values of $\enapp(G)$ and $\affapp(G)$ for finite groups $G$ with $|G|\leq15$ are as specified in Table \ref{table4} below, which lists the groups according to their identifier in the GAP Small Groups Library \cite{EBB18a}.
\end{proposition}

\begin{table}[H]
\caption{Values of $\enapp(G)$ and $\affapp(G)$ for $|G|\leq15$.}
\label{table4}
\begin{center}
\begin{tabular}{|c|c|c|c|}
\hline
SmallGroups ID of $G$ & Structure description of $G$ & $\enapp(G)$ & $\affapp(G)$ \\ \hline
$(1,1)$ & $\{1\}$ & $1$ & $1$ \\ \hline
$(2,1)$ & $\IZ/2\IZ$ & $1$ & $2$ \\ \hline
$(3,1)$ & $\IZ/3\IZ$ & $1$ & $2$ \\ \hline
$(4,1)$ & $\IZ/4\IZ$ & $1$ & $2$ \\ \hline
$(4,2)$ & $(\IZ/2\IZ)^2$ & $2$ & $3$ \\ \hline
$(5,1)$ & $\IZ/5\IZ$ & $1$ & $2$ \\ \hline
$(6,1)$ & $\D_6$ & $0$ & $2$ \\ \hline
$(6,2)$ & $\IZ/6\IZ$ & $1$ & $2$ \\ \hline
$(7,1)$ & $\IZ/7\IZ$ & $1$ & $2$ \\ \hline
$(8,1)$ & $\IZ/8\IZ$ & $1$ & $2$ \\ \hline
$(8,2)$ & $\IZ/4\IZ \times \IZ/2\IZ$ & $1$ & $2$ \\ \hline
$(8,3)$ & $\D_8$ & $1$ & $2$ \\ \hline
$(8,4)$ & $\Dic_8$ & $1$ & $3$ \\ \hline
$(8,5)$ & $(\IZ/2\IZ)^3$ & $3$ & $4$ \\ \hline
$(9,1)$ & $\IZ/9\IZ$ & $1$ & $2$ \\ \hline
$(9,2)$ & $(\IZ/3\IZ)^2$ & $2$ & $3$ \\ \hline
$(10,1)$ & $\D_{10}$ & $0$ & $2$ \\ \hline
$(10,2)$ & $\IZ/10\IZ$ & $1$ & $2$ \\ \hline
$(11,1)$ & $\IZ/11\IZ$ & $1$ & $2$ \\ \hline
$(12,1)$ & $\Dic_{12}$ & $0$ & $2$ \\ \hline
$(12,2)$ & $\IZ/12\IZ$ & $1$ & $2$ \\ \hline
$(12,3)$ & $\Alt(4)$ & $0$ & $3$ \\ \hline
$(12,4)$ & $\D_{12}$ & $1$ & $3$ \\ \hline
$(12,5)$ & $\IZ/6\IZ\times\IZ/2\IZ$ & $1$ & $3$ \\ \hline
$(13,1)$ & $\IZ/13\IZ$ & $1$ & $2$ \\ \hline
$(14,1)$ & $\D_{14}$ & $0$ & $2$ \\ \hline
$(14,2)$ & $\IZ/14\IZ$ & $1$ & $2$ \\ \hline
$(15,1)$ & $\IZ/15\IZ$ & $1$ & $2$ \\ \hline
\end{tabular}
\end{center}
\end{table}

\section{Concluding remarks}\label{sec6}

We conclude this paper with some open questions and problems for further research.

Note that while we gained a deeper understanding of the asymptotic behavior of $\enapp(G)$ and $\affapp(G)$ as $|G|\to\infty$, determining the \emph{precise} values of the two functions on a given finite group remains a challenging problem. Nonetheless, it would be nice to know these precise values at least on a few \enquote{basic} classes of groups. For example, consider the following question, which is, to the author's knowledge, open in this generality:

\begin{question}\label{ques1}
Is $\affapp(\IZ/n\IZ)=2$ for all integers $n\geq2$?
\end{question}

By Lemma \ref{cyclicLem}(2,3), we do know that $\affapp(\IZ/n\IZ)=2$ when $n$ is prime or a power of $2$, and it also holds for $n\leq15$ by Proposition \ref{smallProp2}.

In this context, it would also be nice to extend our list of $\enapp(G)$ and $\affapp(G)$ for \enquote{small} $G$ from Proposition \ref{smallProp2} further, which might also lead to some more interesting conjectures about their behavior on certain classes of finite groups:

\begin{problem}\label{prob}
Determine the precise values of $\enapp(G)$ and $\affapp(G)$ for all finite groups $G$ of order up to $N$, for $N\in\IN$ as large as possible.
\end{problem}

Finally, it would be interesting to determine provably asymptotically best possible upper bounds on $\enapp(G)$ and $\affapp(G)$ in general. With regard to this, we note the following: If a finite group $G$ has a universal $\ell$-tuple, then $|G|^{\log_2{|G|}}\geq|\End(G)|\geq|G|^{\ell}$, so that the \emph{lower} bounds on $\enapp(G)$ and $\affapp(G)$ which we can prove with our current methods from Section \ref{sec2} are at best logarithmic in $|G|$. This leads to the question whether we hit this boundary for a good reason:

\begin{question}\label{ques2}
Is $\enapp(G)\leq\log_2{|G|}$ and $\affapp(G)\leq1+\log_2{|G|}$ for all nontrivial finite groups $G$? If not, is it at least the case that $\affapp(G)=\O(\log{|G|})$ as $|G|\to\infty$ for finite groups $G$?
\end{question}

\end{document}